\def\bR {\mathbf{R}}
\def\bS {\mathbf{S}}
\def\cB {\mathcal{B}}
\def\cD {\mathcal{D}}
\def\cH {\mathcal{H}}
\def\cK {\mathcal{K}}
\def\cL {\mathcal{L}}
\def\cN {\mathcal{N}}
\def\cV {\mathcal{V}}
\def\a {{\alpha}}
\def\b {{\beta}}
\def\g {{\gamma}}
\def\Ga {{\Gamma}}
\def\de {{\delta}}
\def\eps {{\epsilon}}
\def\th {{\theta}}
\def\ka {{\kappa}}
\def\l {{\lambda}}
\def\si {{\sigma}}
\def\om {{\omega}}
\def\Om {{\Omega}}
\def\d {{\partial}}
\def\grad {{\nabla}}
\def\rstr {{\big |}}
\def\Rstr {{\Bigg |}}
\def\indc {{\bf 1}}
\def\la {\langle}
\def\ra {\rangle}
\def \La {\bigg\langle}
\def \Ra {\bigg\rangle}
\def \lA {\big\langle \! \! \big\langle}
\def \rA {\big\rangle \! \! \big\rangle}
\newcommand{\Div}{\operatorname{div}}
\newcommand{\Supp}{\operatorname{supp}}
\newcommand{\Supess}{\mathop{\operatorname{supess}}}
\newcommand{\Det}{\operatorname{det}}
\newcommand{\Ker}{\operatorname{Ker}}
\newcommand{\Ran}{\operatorname{Ran}}
\newcommand{\ba}{\begin{aligned}}
\newcommand{\ea}{\end{aligned}}
\newcommand{\be}{\begin{equation}}
\newcommand{\ee}{\end{equation}}
\newcommand{\lb}{\label}
\newtheorem{thm}{Theorem}[section]
\newtheorem{prop}[thm]{Proposition}
\newtheorem{lem}[thm]{Lemma}
\newtheorem{defn}[thm]{Definition}
\begin{document}

\title{The Diffusion Approximation\\ for the Linear Boltzmann Equation\\ with Vanishing Scattering Coefficient
\thanks{%{Received date / Revised version date}
          % The correct dates will be entered by the CMS editor}}
}}
          %For each author, make a block with the following four macros:
          
\author{Claude Bardos\thanks{Laboratoire Jacques-Louis Lions, BP187, 4 place Jussieu, 75252 Paris Cedex 05 France, (claude.bardos@gmail.com)}
\and
Etienne Bernard\thanks{Institut G\'eographique National, Laboratoire de Recherche en G\'eod\'esie, Universit\'e Paris Diderot, B\^atiment Lamarck A, 5 rue Thomas Mann, Case courrier 7071, 75205 Paris Cedex 13 France, (esteve.bernard@gmail.com)}
\and
Fran\c cois Golse\thanks{Ecole Polytechnique, Centre de Math\'ematiques Laurent Schwartz, 91128 Palaiseau Cedex, France, (francois.golse@math.polytechnique.fr)}
\and
R\'emi Sentis\thanks{Laboratoire Jacques-Louis Lions, BP187, 4 place Jussieu, 75252 Paris Cedex 05 France, (sentis.remi@gmail.com)}
}
          %{Put the URL for your home page here if you have one}

          %Use \thanks statements for acknowledgements of grants and
          %support. They will appear below all the authors' addresses, so be
          %specific about which author is thanking whom:

          %\thanks{}

\pagestyle{myheadings}\markboth{Diffusion Approximation with Vanishing Scattering}{C. Bardos, E. Bernard, F. Golse, R. Sentis}\maketitle

\begin{abstract}
The present paper discusses the diffusion approximation of the linear Boltzmann equation in cases where the collision frequency is not uniformly large in the spatial domain. Our results apply for 
instance to the case of radiative transfer in a composite medium with optically thin inclusions in an optically thick background medium. The equation governing the evolution of the approximate 
particle density coincides with the limit of the diffusion equation with infinite diffusion coefficient in the optically thin inclusions.
\end{abstract}

\begin{keywords}
\smallskip
{Linear Boltzmann equation, Diffusion approximation, Neutron transport equation, Radiative transfer equation}

{\bf subject classifications.} 45K05 (45M05, 82A70, 82C70, 85A25)
\end{keywords}

\rightline{\em Pour George Papanicolaou, en t\'emoignage de gratitude et d'amiti\'e.}
%%%%%%%%%%%%%%%%%%%%%%%%%%%%%%%%%%%%%%%%%%%%%%%%%%%%%%%%%%%%%%%%%%%%%%%%%%%%%%%%%%%%%%%%%%%%%%%%

\section{Introduction}\label{S-Intro}

%%%%%%%%%%%%%%%%%%%%%%%%%%%%%%%%%%%%%%%%%%%%%%%%%%%%%%%%%%%%%%%%%%%%%%%%%%%%%%%%%%%%%%%%%%%%%%%%

The linear Boltzmann equation is a kinetic model used in many different contexts. It appeared (perhaps for the first time) in a paper by Lorentz \cite{Lorentz05} on the motion of electrons in metals, 
and has been since then used in various branches of mathematical physics such as radiative transfer \cite{PomraningRT,MihalasRH}, neutron transport theory \cite{WeinWigner}. A typical model 
linear Boltzmann equation is
\be\lb{LinBoltz0}
(\d_t+\om\cdot\grad_x)f(t,x,\om)=\si\left(\tfrac1{4\pi}\int_{\bS^2}f(t,x,\om')ds(\om')-f(t,x,\om)\right)
\ee
where $t\ge 0$, $x\in\bR^3$ and $\om\in\bS^2$ are respectively the time, position and direction, while $ds$ is the surface element on $\bS^2$. The function $f\equiv f(t,x,\om)$ is the distribution 
function of a population of monokinetic particles (such as photons) moving at speed $1$ in a background medium (such as a planetary or stellar atmosphere). The coefficient $\si>0$ is the 
scattering rate in the medium. The simple model above assumes that the scattering mechanism is isotropic and that the absorption and scattering rate are equal. (The model above is essentially 
eq. (4.216) in \cite{PomraningRT}.) 

A classical approximation of solutions $f$ of the linear Boltzmann equation above is the so-called ``P1 appproximation'', where $f$ is replaced by its spherical harmonics expansion in the angle 
variable $\om$, truncated at order $1$ (see chapter IX in \cite{WeinWigner}). This approximation is also known as the ``Eddington approximation'' in radiative transfer (see for instance chapter III.2 
in \cite{PomraningRT}). The P1 approximation is
$$
f(t,x,\om)\simeq\rho(t,x)+3j(t,x)\cdot\om\,,
$$
so that
$$
\rho(t,x)\simeq\tfrac1{4\pi}\int_{\bS^2}f(t,x,\om')ds(\om')\,,\quad\hbox{ and }j(t,x)\simeq\tfrac1{4\pi}\int_{\bS^2}\om'f(t,x,\om')ds(\om')\,.
$$
This approximation is used in regimes where the scattering coefficient $\si\gg 1$. In other words, the diffusion approximation is justified when the mean free path of the particles between scattering 
events is much smaller than the typical length scale of the spatial domain. In this case $f\simeq\rho$ to leading order, so that $j=-\tfrac1{3\si}\grad_x\rho$. Thus, the P1 approximation becomes
\be\lb{P1Approx}
f(t,x,\om)\simeq\rho(t,x)-\tfrac1{\si}\om\cdot\grad_x\rho(t,x)
\ee
Averaging both sides of the linear Boltzmann equation in the variable $\om$ leads to the local conservation law of mass (or particle number)
$$
\d_t\int_{\bS^2}f(t,x,\om')\tfrac{ds(\om')}{4\pi}+\Div_x\int_{\bS^2}\om'f(t,x,\om')\tfrac{ds(\om')}{4\pi}=0\,.
$$
Substituting the P1 approximate expression for $f$ in the left hand side of this equality results in the diffusion equation
\be\lb{Diff0}
\d_t\rho-\Div_x\left(\tfrac1{3\si}\grad_x\rho\right)=0\,.
\ee
While the P1 or diffusion approximation of the linear Boltzmann equation has been used for a long time, its mathematical justification is more recent. A proof based on Hilbert's expansion 
\cite{LarsenKeller}, a formal expansion of the solution $f$ of the linear Boltzmann equation in powers of $1/\si$, can be found in \cite{BSS}; see also chapter XXI.5 in \cite{DL}. 

Other proofs are based on the representation of the solution $f$ of the linear Boltzmann equation in terms of stochastic process: see for instance \cite{PapanicoBAMS} and the references therein.  
For the original contributions to the subject, see \cite{PBenoist} and \cite{IlinHasmi}. 

The diffusion approximation (\ref{Diff0}) of the linear Boltzmann equation (\ref{LinBoltz0}) is also used for inhomogeneous media where the scattering rate $\si\equiv\si(x)$ varies smoothly with
the position variable. Yet the Cauchy problems for the linear Boltzmann equation (\ref{LinBoltz0}) and for the diffusion equation (\ref{Diff0}) are well posed, in some weak sense to be discussed 
below, under the only assumption that both $\si$ and $1/\si$ belong to $L^\infty(\bR^3)$. However both the proof of the diffusion approximation based on the Hilbert expansion \cite{BSS} and the 
proof based on stochastic processes \cite{PapanicoBAMS} require that the scattering rate $\si$ satisfy rather stringent smoothness assumptions. In particular, these smoothness assumptions 
exclude scattering rates that are discontinuous functions of the position variable. This is unfortunate, since discontinuities in the scattering rate $\si(x)$ appear in the case of inhomogeneous or 
composite materials.

Another related issue is the order of magnitude of the scattering rate. As recalled above, the diffusion approximation (\ref{Diff0}) of the linear Boltzmann equation (\ref{LinBoltz0}) is justified 
if the scattering rate $\si(x)$ depends smoothly on the space variable $x$ and is large uniformly in $x$. In some applications involving strongly inhomogeneous media, the order of magnitude 
of the scattering rate may vary considerably in the spatial domain. For instance, in the context of neutron transport and nuclear reactor design, the scattering cross-section for neutron collisions 
of non-fission type in the uranium oxide is about 100 times higher than in water. Another example is the case of nondiffusive objects embedded in a diffusive medium in the context of medical 
imaging: see \cite{Bal2002}, especially the examples given on p. 1678.

The present work addresses the following problem.

\smallskip
\noindent
{\sc Problem:} to extend the validity of the diffusion approximation to cases where the scattering coefficient $\sigma\equiv\si(x)$ is neither continuous nor large uniformly as $x$ runs through the 
spatial domain. 

\smallskip
The formulation of this problem obviously includes cases where there is no separation of scale in the size of the scattering coefficient in the diffusive and nondiffusive regions. Asymptotic
expansions ``\`a la Hilbert'' cannot be used in such generality. 

Our approach is based instead on an energy method similar to the one used earlier in the derivation of the Rosseland equation from the radiative transfer equation \cite{BGPS}, or of the 
drift-diffusion equation from the Boltzmann equation with Fermi-Dirac statistics \cite{GolPou}, or even in the derivation of incompressible fluid dynamics from the Boltzmann equation of the 
kinetic theory of gases \cite{BGL2}.

\smallskip
The main results in this paper were presented by one of us (C.B.) to the conference ``Recent developments in applied mathematics'' in honor of G. Papanicolaou. Our own work on the
various asymptotic theories of kinetic models, including the diffusion approximation of the linear Boltzmann equation, the Rosseland approximation of the radiative transfer equation or 
the hydrodynamic limits of the Boltzmann equation in the kinetic theory of gases, was strongly influenced by G. Papanicolaou's remarkable contributions \cite{PapanicoBAMS,BLP} to
this subject. We are happy to dedicate this paper to our friend and colleague G. Papanicolaou on the occasion of his 70th birthday.

%%%%%%%%%%%%%%%%%%%%%%%%%%%%%%%%%%%%%%%%%%%%%%%%%%%%%%%%%%%%%%%%%%%%%%%%%%%%%%%%%%%%%%%%%%%%%%%%

\section{Presentation of the problem and main result}\label{S-Main}

%%%%%%%%%%%%%%%%%%%%%%%%%%%%%%%%%%%%%%%%%%%%%%%%%%%%%%%%%%%%%%%%%%%%%%%%%%%%%%%%%%%%%%%%%%%%%%%%

\subsection{The linear Boltzmann equation and the scaling assumptions}\label{SS-LinBoltz}
%%%%%%%%%%%%%%%%%%%%%%%%%%%%%%%%%%%%%%%%%%%%%%%%%%%%%%%%%%%%%%%%%%%%%%%%%%%%%%%%%%%%%%%%%%%%%%%%

Consider the linear Boltzmann equation
\be\lb{LinB}
(\d_t+v\cdot\grad_x)f(t,x,v)+\cL_xf(t,x,v)=0
\ee
for the unknown $f\equiv f(t,x,v)$ that is the distribution function for a system of identical point particles interacting with some background material. In other words, $f(t,x,v)$ is the number density of 
particles located at the position $x\in\Om$, with velocity $v\subset\bR^N$ at time $t\ge 0$. Henceforth, we assume that $\Om$ is a bounded domain of $\bR^N$ with $C^1$ boundary $\d\Om$, and
that $\Om$ is locally on one side of $\d\Om$.

The notation $\cL_x$ designates a linear integral operator acting on the $v$ variable in $f$, i.e.
\be\lb{Lx}
\cL_xf(t,x,v)=\int_{\bR^N}k(x,v,w)(f(t,x,v)-f(t,x,w))d\mu(w)
\ee
where $\mu$ is a Borel probability measure on $\bR^N$, while $k$ is a nonnegative function defined $\mu\otimes\mu$-a.e. on $\bR^N\times\bR^N$ . We assume that $k$ satisfies the semi-detailed 
balance condition
\be\lb{SemDet}
\int_{\bR^N}k(x,v,w)d\mu(w)=\int_{\bR^N}k(x,w,v)d\mu(w)
\ee
and introduce the notation
\be\lb{Defa}
a(x,v):=\int_{\bR^N}k(x,v,w)d\mu(w)
\ee
for the scattering  rate, so that the kernel $k(x,v,w)$, up to the multiplicative coefficient $1/a$, measures the probability of a transition from velocity $w$ to velocity $v$ for particles located at the position 
$x$. Eventually one has:
$$
\cL_xf(t,x,v)=a(x,v)f(t,x,v)-\cK_xf(t,x,v)
$$
where $\cK_x$ designates the integral operator
\be\lb{DefK}
\cK_xf(t,x,v):=\int_{\bR^N}k(x,v,w)f(t,x,w)d\mu(w)\,.
\ee
The semi-detailed balance assumption appears for instance in \cite{LL10} --- see formula (2.9) in \S 2. The assumptions on the transition kernel other than (\ref{SemDet}) used in our discussion 
are introduced later.

Denoting by $n_x$ the unit outward normal field at $x\in\d\Om$, we henceforth consider the outgoing, characteristic and incoming components of $\d\Om\times\bR^N$:
$$
\ba
\Ga_+:=\{(x,v)\in\d\Om\times\bR^N\,|\,v\cdot n_x>0\}\,,
\\
\Ga_0:=\{(x,v)\in\d\Om\times\bR^N\,|\,v\cdot n_x=0\}\,,
\\
\Ga_-:=\{(x,v)\in\d\Om\times\bR^N\,|\,v\cdot n_x<0\}\,.
\ea
$$
The linear Boltzmann equation is supplemented with the absorption boundary condition
\be\lb{AbsBC}
f(t,x,v)=0\,,\qquad (x,v)\in\Ga_-\,,\quad t>0\,.
\ee
(In other words, it is assumed that there are no particles entering the domain $\Om$.) This choice is made for the sake of simplicity; other boundary conditions will be discussed later.

We next introduce the scaling assumption pertaining to the diffusion approximation of the linear Boltzmann equation (\ref{LinB}). Set $L$ to be a length scale that measures the size of $\Om$ 
while $V$ is the average particle speed;  consider the time scale $T:=L/V$. The diffusion limit of (\ref{LinB}) is based on the assumption that the dimensionless quantity $Ta(x,v)$ is large. We
introduce a scaling parameter $0<\eps\ll 1$ and set
$$
\hat k_\eps(x,v,w):=\eps k(x,v,w)
$$
so that $\hat k_\eps(x,v,w)$ is of order unity. Accordingly, we define
$$
\hat a_\eps(x,v):=\eps a_\eps(x,v)\,,\quad\hat\cL_x=\eps\cL_x\,,\quad\hbox{ and }\hat\cK_x=\eps\cK_x\,.
$$
(For notational simplicity, we do not mention explicitly the dependence of $\cL_x$ and $\cK_x$ in $\eps$.) Assume further that 
\be\lb{MeanV=0}
\int_{\bR^N}vd\mu(v)=0
\ee
and that variations of order unity of the boundary data driving the solution of (\ref{LinB}) do not occur on time scales shorter than $T/\eps$. This is obvious for the boundary condition (\ref{AbsBC}); 
however, this assumption is crucial and needs to be satisfied for some more general boundary conditions. In that case, the solution $f$ of (\ref{LinB}) is sought in the form
$$
f(t,x,v)=\hat f_\eps(\eps t,x,v)
$$
with the notation $\hat t=\eps t$ for the rescaled time variable.  Thus (\ref{LinB}) takes the form
$$
\eps\d_{\hat t}f_\eps(\hat t,x,v)+v\cdot\grad_xf_\eps(\hat t,x,v)+\frac1\eps\hat\cL_xf_\eps(\hat t,x,v)=0\,.
$$
Henceforth we drop hats on rescaled variables and consider the initial-boundary value problem for the scaled linear Boltzmann equation
\be\lb{ScalLinB}
\left\{
\ba
{}&(\eps\d_t+v\cdot\grad_x)f_\eps(t,x,v)+\frac1\eps\cL_xf_\eps(t,x,v)=0\,,&&\quad x\in\Om\,,\,\,v\in\bR^N\,,\,\,t>0\,,
\\
&f_\eps(t,x,v)=0\,,&&\quad (x,v)\in\Ga_-\,,\,\,t>0\,,
\\
&f_\eps(0,x,v)=f^{in}(x,v)&&\quad x\in\Om\,,\,\,v\in\bR^N\,,
\ea
\right.
\ee
with
\be
\cL_xf(t,x,v)=a_\epsilon(x,v)f(t,x,v)-\int k_\epsilon(x,v,w) f(t,x,w)d\mu(w)\,,
\ee
in the limit as $\eps\to 0$.

\subsection{Statement of the diffusion approximation}\lb{SS-DiffApprox}
%%%%%%%%%%%%%%%%%%%%%%%%%%%%%%%%%%%%%%%%%%%%%%%%%%%%%%%%%%%%%%%%%%%%%%%%%%%%%%%%%%%%%%%%%%%%%%%%

From now on, we assume that the probability measure satisfies (\ref{MeanV=0}) and
\be\lb{Mom2mu}
0<\int_{\bR^N}|v\cdot\xi|^2d\mu(v)<\infty\qquad\hbox{ for all }\xi\in\bR^N\setminus\{0\}\,.
\ee

Assume that the spatial domain $\Om=A\cup B$, where $A$ is open and $B$ is closed in $\bR^N$ (i.e. $B\cap\d\Om=\varnothing$), with finitely many connected components denoted $B_l$, 
for $l=1,\ldots,m$. We further assume that $B_l$ has piecewise $C^1$ boundary, that $B_l$ is locally on one side of its boundary $\d B_l$. Finally, we denote by $n_x$ the unit normal field at 
$x\in\d A$, oriented towards the exterior of $A$. 

We further assume that the scattering kernel $k_\eps$ in the linear Boltzmann equation is a $dxd\mu(v)d\mu(w)$-a.e. nonnegative measurable function on $\Om\times\bR^N\times\bR^N$ 
satisfying the following assumptions, in addition to (\ref{SemDet}):

\smallskip
\noindent
(H1) the absorption rate $a_\eps$ is uniformly small on $B\times\bR^N$ as $\eps\to 0$, i.e.
\be\lb{ato0onB}
\|a_\eps\|_{L^\infty(B\times\bR^N,dxd\mu)}\to 0\quad\hbox{ as }\eps\to 0\,;
\ee
(H2) the restriction of $k_\eps$ to $A\times\bR^N\times\bR^N$ is assumed to be independent of $\eps$ and denoted $k_A\equiv k_A(x,v,w)$; it satisfies
\be\lb{Hyp1/k}
C_K:=\Supess_{(x,v)\in A\times\bR^N}\int_{\bR^N}\left(k_A(x,v,w)+\frac1{k_A(x,v,w)}+\frac1{k_A(x,w,v)}\right)d\mu(w)<\infty\,.
\ee
We henceforth denote
\be\lb{DefaA}
a_A(x,v):=\int_{\bR^N}k_A(x,v,w)d\mu(w)\,,\quad\hbox{ for }dxd\mu(v)-\hbox{a.e. }(x,v)\in A\times\bR^N\,.
\ee

\smallskip
The diffusion approximation requires one additional assumption on the set $B$ where the scattering rate vanishes as $\eps\to 0$. A first possibility is to postulate a lower bound on the scattering 
rate on $B$ which is compatible with assumption (H1) as $\eps\to 0$:

\noindent
\smallskip
(H3) the restriction of $k_\eps$ to $A\times\bR^N\times\bR^N$ is assumed to satisfy the bound
\be\lb{Hyp1/kB}
\Supess_{(x,v)\in B\times\bR^N}\int_{\bR^N}\frac{d\mu(w)}{k_\eps(x,v,w)}=o(1/\eps^2)\quad\hbox{ as }\eps\to 0\,.
\ee

\smallskip
However, the diffusion approximation can be proved even for scattering rates of order $O(\eps^2)$, including the case $a_\epsilon=0$ corresponding to vacuum, at the expense of an ergodicity 
assumption on the free transport operator in each connected component $B_l$ of $B$. For each $l=1,\ldots,m$, denote by $\tau_l\equiv\tau_l(x,v)$ the forward exit time  from $B_l$ starting from 
the position $x$ with the velocity $v$; in other words
\be\lb{DefTau}
\tau_l(x,v):=\inf\{t>0\hbox{ s.t. }x+tv\in\d B_l\}\,.
\ee

Instead of condition (H3), one can assume that 

\smallskip
\noindent
(H4) the Borel probability measure $\mu$ satisfies $\mu(\{0\})=0$ and, for each $l=1,\ldots,m$ and each $g\in L^2(\d B_l)$,
\be\lb{Ergo}
\ba
g(x+\tau_l(x,v)v)=g(x)\hbox{ for }d\si(x)d\mu(v)-\hbox{a.e. }(x,v)\in\d B_l\times\bR^N&
\\
\Rightarrow g(x)=\frac1{|\d B_l|}\int_{\d B_l}g(y)d\si(y)\hbox{ for a.e. }x\in\d B_l&\,,
\ea
\ee
where $d\si$ is the surface element on $\d B_l$.

For instance, the assumption (H4) is satisfied if the measure $\mu$ is spherically symmetric and if $B_l$ is convex for each $l=1,\ldots,m$.

\smallskip
The main result in this paper is summarized in the following theorem.

\begin{thm}\lb{T-WDiffApprox}
Assume that the Borel probability measure $\mu$ satisfies (\ref{MeanV=0})-(\ref{Mom2mu}), while the scattering kernel $k_\eps$ satisfies (\ref{SemDet}), together with the conditions (H1)-(H2)
and at least one of the conditions (H3) or (H4). Let $f^{in}\in L^2(\Om\times\bR^N;dxd\mu)$.

\noindent
(a) For each $\eps>0$, the Cauchy problem (\ref{ScalLinB}) has a unique weak solution $f_\eps$ belonging to $C_b(\bR_+;L^2(\Om\times\bR^N;dxd\mu(v)))$.

\noindent
(b) For a.e. $x\in A$, there exists a unique $\bR^N$-valued vector field $b^*(x,\cdot)\in L^2(\bR^N,d\mu)$ such that
$$
\cL_x^*b^*(x,v)=v\qquad\hbox{ and }\int_{\bR^N}b^*(x,v)d\mu(v)=0\,.
$$
(c) The $M_N(\bR)$-valued matrix field $M$ defined by 
$$
M_{ij}(x):=\int_{\bR^N}b^*_i(x,v)v_jd\mu(v)\quad\hbox{ for a.e. }x\in A\hbox{ and all }i,j=1,\ldots,N
$$
satisfies 
$$
|M_{ij}(x)|\le 2C_K\|v_i\|_{L^2(\bR^N,d\mu)}\|v_j\|_{L^2(\bR^N,d\mu)}\quad\hbox{ for a.e. }x\in A\hbox{ and all }i,j=1,\ldots,N
$$
and
$$
\sum_{i,j=1}^NM_{ij}(x)\xi_i\xi_j\ge\frac{\b}{2C_k}|\xi|^2\hbox{ for all }\xi\in\bR^N\,,\quad\hbox{ for a.e. }x\in A\,,
$$
where $\b>0$ is the smallest eigenvalue of the real symmetric matrix $S$ defined by
\be\lb{DefS}
S_{ij}:=\int_{\bR^N}v_iv_jd\mu(v)\,.
\ee
(d) In the limit as $\eps\to 0$,
$$
f_\eps\to\rho\hbox{ weakly-* in }L^\infty(\bR_+;L^2(\Om\times\bR^N;dxd\mu))
$$
where $\rho$ is the unique weak solution of
\be\lb{LimPDE}
\left\{
\ba
{}&\d_t\rho(t,x)=\Div_x(M(x)\grad_x\rho(t,x))\,,&&\quad x\in A\,,\,\,t>0\,,
\\	
&\rho(t,x)=0\,,&&\quad x\in\d\Om\,,\,\,t>0\,,
\\
&\rho(t,x)=\rho_l(t)\,,&&\quad x\in\d B_l\,,\,\,\quad l=1,\ldots,m\,,\,\,t>0\,,
\\
&\dot{\rho}_l(t)=\frac1{|B_l|}\int_{\d B_l}\frac{\d\rho}{\d n_M}(t,x)d\si(x)\,,&&\quad l=1,\ldots,m\,,\,\,t>0\,,
\\ 
&\rho(0,x)=\rho^{in}(x)\,,&&\quad x\in\Om\,.
\ea
\right.
\ee
and where $\rho^{in}$ is given by the following formula:
$$
\rho^{in}(x)=\left\{\ba{}&\int_{\bR^N}f^{in}(x,v)d\mu(v)\quad&&\hbox{ for a.e. }x\in A\\&\frac1{|B_l|}\iint_{B_l\times\bR^N}f^{in}(y,v)dyd\mu(v)&&\hbox{ for a.e. }x\in B_l\ea\right.
$$
\end{thm}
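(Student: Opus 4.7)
For (a), the semi-detailed balance (\ref{SemDet}) yields $\int(\cL_x f)f\,d\mu=\tfrac{1}{2}\iint k_\eps(f(v)-f(w))^2 d\mu(v)d\mu(w)\ge 0$, so the operator $\eps^{-1}v\cdot\grad_x+\eps^{-2}\cL_x$ with the absorption boundary condition is maximal accretive on $L^2(\Om\times\bR^N;dxd\mu)$; Hille--Yosida gives a unique weak solution in $C_b(\bR_+;L^2)$. For (b), the key is a Poincar\'e inequality on $A$: applying Cauchy--Schwarz to $\phi(v)-\int\phi\,d\mu(w)=\int(\phi(v)-\phi(w))d\mu(w)$ with weights $\sqrt{k_A}$ and $1/\sqrt{k_A}$ and using (H2) yields
\[
\textstyle\int\bigl(\phi-\int\phi\,d\mu\bigr)^2 d\mu \le 2C_K\int\phi\,\cL_x\phi\,d\mu,
\]
which forces $\ker\cL_x=\ker\cL_x^*=\bR$ and closed range equal to the mean-zero subspace; since $\int v\,d\mu=0$ by (\ref{MeanV=0}), the Fredholm alternative produces the unique mean-zero $b^*$ with $\cL_x^*b^*=v$. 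For (c), with $\phi:=\xi\cdot b^*$ the identity $\xi^T M\xi=\int\phi\,\cL_x^*\phi\,d\mu=\tfrac{1}{2}\iint k_A(\phi(v)-\phi(w))^2 d\mu d\mu\ge 0$ holds; the entrywise upper bound follows from $\|b_i^*\|\le 2C_K\|v_i\|$ (Poincar\'e applied to the equation) together with Cauchy--Schwarz, and the lower bound follows from a variational formulation of $\xi^T M\xi$ tested against $v\cdot\xi$, whose Dirichlet form $\tfrac{1}{2}\iint k_A((v-w)\cdot\xi)^2 d\mu d\mu$ is bounded by $2C_K\,\xi^T S\xi$ using the bounds $a_A\le C_K$ from (H2).

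\textbf{Energy estimate and identification of the weak limit (d).} Multiplying (\ref{ScalLinB}) by $f_\eps$ and integrating yields the energy identity
\[
\tfrac{\eps}{2}\|f_\eps(t)\|^2+\tfrac{1}{2}\!\int_0^t\!\!\int_{\Ga_+}\!f_\eps^2|v\cdot n_x|d\sigma d\mu\, ds+\tfrac{1}{\eps}\!\int_0^t\!\!\int_\Om\!\iint k_\eps(f_\eps(v)-f_\eps(w))^2 d\mu d\mu\, dx\, ds=\tfrac{\eps}{2}\|f^{in}\|^2,
\]
so $f_\eps$ is bounded in $L^\infty_tL^2_{x,v}$, the $\Ga_+$ outflow is $O(\sqrt\eps)$ in $L^2$, and the total dissipation is $O(\eps^2)$. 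Extract $f_\eps\wto f$ weakly-$*$. Applying the Poincar\'e inequality from (b) to $f_\eps(t,x,\cdot)$ on $A$ gives $\int_A\|f_\eps-\langle f_\eps\rangle\|_{L^2(d\mu)}^2 dx=O(\eps^2)\to 0$, so $f|_A=\rho(t,x)$ is $v$-independent. Under (H3), the same Cauchy--Schwarz argument with $\int d\mu/k_\eps=o(1/\eps^2)$ in place of $C_K$ yields the analogous $v$-independence on $B$; under (H4), one instead passes to the limit in $v\cdot\grad_x f_\eps=-\eps^{-1}\cL_x f_\eps+O(\eps)$ on each $B_l$ to obtain $v\cdot\grad_x f=0$, and then uses the ergodicity condition (\ref{Ergo}) applied to the trace of $f$ at $\d B_l$ to conclude that $f|_{B_l}=\rho_l(t)$ is spatially constant.

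\textbf{Deriving the limit PDE and the ODE for $\rho_l$.} Use the Hilbert-corrected test function
\[
\varphi_\eps(t,x,v):=\phi(t,x)+\eps\,b^*(x,v)\cdot\grad_x\phi(t,x)\,\indc_A(x),
\]
with $\phi$ smooth, compactly supported in $[0,T)$, vanishing on $\d\Om$ and spatially constant $c_l(t)$ on each $\d B_l$. Since $\phi$ is $v$-independent, semi-detailed balance gives $\cL_x^*\phi=0$, and $\cL_x^*(b^*\cdot\grad\phi)=v\cdot\grad\phi$ by definition of $b^*$; the stiff contributions $-v\cdot\grad\varphi_\eps+\eps^{-1}\cL_x^*\varphi_\eps$ in the adjoint weak formulation collapse to $-\eps\,v\cdot\grad(b^*\cdot\grad\phi)$ on $A$ and to $0$ on $B_l$ (since $\grad\phi=0$ there), while the $\Ga_+$ boundary term is $O(\sqrt\eps)$ by the outflow estimate. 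Dividing by $\eps$ and passing to the limit, the $A$-contribution yields the weak form of $\d_t\rho=\Div_x(M\grad_x\rho)$ after the identification $\int v\cdot\grad(b^*\cdot\grad\phi)d\mu=\Div_x(M^T\grad\phi)$; the $B_l$-contribution, combined with the flux across $\d B_l$ coming from the $A$-side (which appears through integration by parts after splitting $\Om=A\cup B$), produces the ODE $|B_l|\dot\rho_l=\int_{\d B_l}\d\rho/\d n_M\,d\sigma$ together with the matching $\rho=\rho_l(t)$ on $\d B_l$.

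\textbf{Main obstacles.} The most delicate steps concern the interfaces $\d B_l$: (i) under (H4), the reduction of $v$-independence of $\rho$ to the ergodicity of straight-line transport in $B_l$ requires a careful trace argument for $L^2$ functions satisfying $v\cdot\grad_x f=0$, and is genuinely geometric; (ii) justifying the continuity of $\rho$ across $\d B_l$ and the existence of a normal flux trace $\d\rho/\d n_M$ with the right regularity is a non-standard elliptic trace issue since $\rho$ is a priori only in $L^\infty_tL^2_x$; and (iii) choosing the admissibility class of test functions and handling the $O(\eps)$ jump of $\varphi_\eps$ across $\d B_l$ (arising because $b^*$ is defined only on $A$) so that the interior limit on $A$ and the mass-balance limit on $B_l$ couple correctly without spurious interface contributions. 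The absence of scale separation between $A$ and $B$ prevents a Hilbert-expansion proof, so these points must be settled entirely by the energy-method/weak-compactness route.
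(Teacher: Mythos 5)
Your parts (a)--(c) follow essentially the paper's own route (accretivity/semigroup for (a); the weighted Cauchy--Schwarz Poincar\'e inequality from (H2), closed range, and the Fredholm alternative for (b); the Dirichlet-form identities for (c)), and your Step on compactness, the $v$-independence of the limit on $A$, and the treatment of $B$ under (H3) versus (H4) also match the paper. The genuine problem is in your derivation of the limit PDE. You use the corrected test function $\varphi_\eps=\phi+\eps\, b^*(x,v)\cdot\grad_x\phi\,\indc_A(x)$ and claim the stiff terms collapse to $-\eps\, v\cdot\grad_x(b^*\cdot\grad_x\phi)$, which you then divide by $\eps$ and pair with $f_\eps$. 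This step requires $x\mapsto b^*(x,\cdot)$ to be differentiable (or at least to have a distributional gradient that can be paired with the merely-$L^2$ density $f_\eps$). Under (H1)--(H2) the vector field $b^*$ is only measurable and essentially bounded in $x$ with values in $L^2(d\mu)$; it has no $x$-regularity whatsoever, and the paper explicitly flags this as the reason Hilbert-expansion-type arguments (of which your dual ansatz is the adjoint version) cannot be used here. As written, the quantity $v\cdot\grad_x(b^*\cdot\grad_x\phi)$ is not defined, and $\varphi_\eps$ is not an admissible test function for the transport operator. The repair is exactly what the paper does: never let a derivative fall on $b^*$. One writes the flux as
$$
\frac1\eps\la vf_\eps\ra=\La b(x,\cdot)\,\frac1\eps\cL_xf_\eps\Ra\,,
$$
so $b$ enters only as a multiplier against the weakly convergent entropy-production density, and the limit flux is identified as $M(x)\grad_x\rho$ by using the \emph{limiting} kinetic equation $v\cdot\grad_xf+\int k_A q\,d\mu(w)=0$, in which the derivative falls on $\rho\in L^2_tH^1_x$. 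The limit equation and the transmission conditions on $\d B_l$ are then read off from the local conservation law tested against $w\in\cV$ (functions in $H^1_0(\Om)$ constant on each $B_l$), together with a normal-trace lemma for divergence-free space-time fields.

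A secondary omission: you never establish that the limit $\rho$ satisfies the homogeneous Dirichlet condition on $\d\Om$. Testing only against functions vanishing on $\d\Om$ identifies the interior equation but not the boundary condition of $\rho$ itself, and without $\rho\in L^2(\bR_+;H^1_0(\Om))$ the limit problem does not have a unique solution, so the ``whole family converges'' conclusion (no subsequence extraction) fails. The paper handles this with Cessenat's trace theorem applied to time-averages of $f_\eps$, using the uniform $L^2$ bound on $v\cdot\grad_x\int\chi(t)f_\eps\,dt$ to pass to the limit in the vanishing incoming trace on $\Ga_-$ and conclude $\rho\rstr_{\d\Om}=0$. Your interface discussion for $\d B_l$ correctly identifies the delicate points, but the paper's variational formulation (Definition of weak solution via the space $\cV$) is precisely designed so that the continuity of $\rho$ across $\d B_l$ and the ODE for $\rho_l$ are consequences of membership in $\cH\cap H^1_0$ and of the weak formulation, rather than statements requiring a separate flux-trace construction at the $\eps$-level.
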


\smallskip
In (\ref{LimPDE}), we have used the standard notation
$$
\frac{\d\rho}{\d n_M}(t,x):=\sum_{i,j=1}^NM_{ij}(x)n_{x,i}\d_{x_j}\rho(t,x)\,.
$$

\smallskip
The diffusion approximation stated in Theorem \ref{T-WDiffApprox} (d) can be strengthened as follows, provided that the initial condition $f^{in}$ is independent of $v$ and constant in each
one of the connected components $B_l$ of $B$. 

\begin{thm}\lb{T-SDiffApprox}
Assume that the Borel probability measure $\mu$ satisfies (\ref{MeanV=0})-(\ref{Mom2mu}), while the scattering kernel $k_\eps$ satisfies (\ref{SemDet}), together with the conditions (H1)-(H2). 
and at least one of the conditions (H3) or (H4). Let $\rho^{in}\in L^2(\Om)$ satisfy the condition
$$
\rho^{in}(x)=\frac1{|B_l|}\int_{B_l}\rho^{in}(y)dy\quad\hbox{ for a.e. }x\in B_l\,,\quad l=1,\ldots,m\,.
$$
Assume further that 
$$
\cL_xb^*(x,v)=v\quad\hbox{ for a.e. }(x,v)\in A\times \bR^N\,,
$$
where $b^*$ is the vector field defined in Theorem \ref{T-WDiffApprox} (b). Then 

\noindent
(a) for a.e. $x\in A$, the matrix $M(x)$ defined in Theorem \ref{T-WDiffApprox} is symmetric;

\noindent
(b) the solution $f_\eps$ of the Cauchy problem for the linear Boltzmann equation (\ref{ScalLinB}) satisfies
$$
f_\eps(t,\cdot,\cdot)\to\rho(t,\cdot)\hbox{ strongly in }L^2(\Om\times\bR^N;dxd\mu)\hbox{ for all }t\ge 0
$$
and
$$
\frac1\eps\left(f_\eps-\int_{\bR^N}f_\eps d\mu(v)\right)\to -b^*\cdot\grad_x\rho\hbox{ strongly in }L^2(\bR_+\times\Om\times\bR^N;dxd\mu)
$$
as $\eps\to 0$, where $\rho$ is the unique weak solution of the diffusion problem (\ref{LimPDE}).
\end{thm}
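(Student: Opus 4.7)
The symmetry of $M(x)$ is immediate. Combining $\cL_x^*b^*=v$ (from Theorem \ref{T-WDiffApprox}(b)) with the additional hypothesis $\cL_xb^*=v$, one has, for each $i,j$ and a.e. $x\in A$,
\[
M_{ij}(x)=\int_{\bR^N}b^*_i\,v_j\,d\mu=\int_{\bR^N}b^*_i(\cL_xb^*_j)\,d\mu=\int_{\bR^N}(\cL_x^*b^*_i)b^*_j\,d\mu=\int_{\bR^N}v_ib^*_j\,d\mu=M_{ji}(x).
\]
For parts (b)-(c) the plan is a modulated-energy argument: match the energy identity for $f_\eps$ with the limit identity for $\rho$, exploiting the compatibility assumption on $f^{in}$, which guarantees $\|f^{in}\|^2_{L^2(\Om\times\bR^N;dxd\mu)}=\|\rho^{in}\|^2_{L^2(\Om)}$.

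\textbf{Energy identity, decomposition and identification of the fluctuation.} Multiplying (\ref{ScalLinB}) by $f_\eps$ and integrating over $\Om\times\bR^N$ yields
\[
\tfrac12\|f_\eps(t)\|^2+\frac1{\eps^2}\int_0^t\!\!\iint_{\Om\times\bR^N}(\cL_xf_\eps)f_\eps\,dxd\mu\,ds+\frac1{2\eps}\int_0^t\!\!\int_{\Ga_+}f_\eps^2(v\cdot n_x)\,d\si d\mu\,ds=\tfrac12\|f^{in}\|^2.
\]
Set $\rho_\eps(t,x):=\int f_\eps\,d\mu$ and $g_\eps:=\eps^{-1}(f_\eps-\rho_\eps)$. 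The semi-detailed balance (\ref{SemDet}) gives $\cL_x\rho_\eps=\cL_x^*\rho_\eps=0$, whence $\cL_xf_\eps=\eps\,\cL_xg_\eps$ and
\[
\frac1{\eps^2}\iint(\cL_xf_\eps)f_\eps\,dxd\mu=\iint(\cL_xg_\eps)g_\eps\,dxd\mu.
\]
Under (H2), $\cL_x$ is coercive on $(\Ker\,\cL_x)^\perp\subset L^2(d\mu)$, yielding a uniform bound on $g_\eps$ in $L^2(\bR_+\times A\times\bR^N)$; (H3) or (H4) supplies analogous control on $B$. Along a subsequence $g_\eps\wto g$ weakly in $L^2$, and passing to the weak limit in $\cL_xg_\eps=-\eps\d_tf_\eps-v\cdot\grad_xf_\eps$ gives $\cL_xg=-v\cdot\grad_x\rho$ on $A$. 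Combined with $\cL_xb^*=v$ and $\int g\,d\mu=\int b^*\,d\mu=0$, this identifies $g=-b^*\cdot\grad_x\rho$.

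\textbf{Energy matching and strong convergence.} Testing (\ref{LimPDE}) against $\rho$ and integrating by parts on $A$, the Dirichlet condition on $\d\Om$ together with $\rho=\rho_l$ on $\d B_l$ and the ODE for $\rho_l$ combine so that the boundary contributions on $\d B_l$ reorganize exactly into $\tfrac12\frac{d}{dt}\int_B\rho^2dx$, yielding the limit identity
\[
\tfrac12\|\rho(t)\|^2_{L^2(\Om)}+\int_0^t\!\!\int_AM(x)\grad_x\rho\cdot\grad_x\rho\,dxds=\tfrac12\|\rho^{in}\|^2_{L^2(\Om)}.
\]
Weak lower semicontinuity of the convex quadratic form $h\mapsto\iint(\cL_xh)h\,dxd\mu$, applied along $g_\eps\wto-b^*\cdot\grad_x\rho$, combined with the computation
\[
\iint(\cL_x(b^*\cdot\grad_x\rho))(b^*\cdot\grad_x\rho)\,d\mu=\sum_{i,j}M_{ji}\d_{x_i}\rho\,\d_{x_j}\rho=M\grad_x\rho\cdot\grad_x\rho
\]
(where the last equality uses the symmetry of $M$ from part (a)), gives $\liminf\int_0^t\!\iint_A(\cL_xg_\eps)g_\eps\,dxd\mu\,ds\ge\int_0^t\!\int_AM\grad_x\rho\cdot\grad_x\rho\,dxds$. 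Inserting this in the $f_\eps$ energy identity, and using $\|f^{in}\|^2=\|\rho^{in}\|^2$ together with the nonnegativity of the $\Ga_+$ flux term, yields $\limsup\|f_\eps(t)\|^2\le\|\rho(t)\|^2$; combined with $f_\eps(t)\wto\rho(t)$ and weak lower semicontinuity of the norm, this forces strong convergence in $L^2(\Om\times\bR^N;dxd\mu)$. The strong convergence $g_\eps\to-b^*\cdot\grad_x\rho$ then follows: equality is forced in the dissipation matching, so $\iint(\cL_xg_\eps)g_\eps\to\iint(\cL_xg)g$ in $L^1(\bR_+)$, and expanding $\iint(\cL_x(g_\eps-g))(g_\eps-g)$ plus the coercivity of $\cL_x$ on $(\Ker\,\cL_x)^\perp$ in $L^2(d\mu)$ upgrades weak to strong convergence.

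\textbf{Main obstacle.} The most delicate point is the handling of the region $B$, where $\cL_x$ degenerates as $\eps\to0$: neither the weak identification of $g$ nor the coercive upgrade to strong convergence can be read off directly there, and one must exploit the ergodicity condition (H4) (or the mild lower bound (H3)) together with the ODE for $\rho_l$ to propagate strong convergence into $B$ and force $g_\eps\to 0$ strongly on $\bR_+\times B\times\bR^N$, consistent with $\grad_x\rho\equiv 0$ on $B_l$. One must simultaneously control the $\eps^{-1}$-scaled boundary flux at $\Ga_+$ so that it does not disturb the matching of energies. Finally, the symmetry of $M$ is indispensable: without the hypothesis $\cL_xb^*=v$, the quadratic form $(\cL_x(b^*\cdot\grad_x\rho),b^*\cdot\grad_x\rho)$ would only equal $M^T\grad_x\rho\cdot\grad_x\rho$, and the dissipation in the limit energy identity could not be rewritten as $M\grad_x\rho\cdot\grad_x\rho$.
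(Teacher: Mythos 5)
Your overall strategy --- squeeze the kinetic energy/entropy inequality against the energy identity of Proposition \ref{P-ExUnDiff} for the limit problem, then upgrade weak to strong convergence --- is exactly the paper's, and your part (a) coincides with the paper's argument via Proposition \ref{P-FredLx} (c)--(d). Where you genuinely diverge is in the treatment of the dissipation term on $A$. The paper never fully identifies the weak limit $q$ of $q_\eps$ before the squeezing: it symmetrizes the kernel ($k_A^s$), uses the antisymmetry of $q$, and derives the \emph{pointwise} inequality $2\grad_x\rho\cdot M\grad_x\rho\le\lA k_A(x,\cdot,\cdot)q(t,x,\cdot,\cdot)^2\rA$ by a Cauchy--Schwarz argument against the test function $\xi\cdot b$, only recovering $q=-(b^*(v)-b^*(w))\cdot\grad_x\rho$ at the very end from the equality case. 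You instead identify the weak limit of $g_\eps=\eps^{-1}(f_\eps-\la f_\eps\ra)$ outright: $\cL_x g=-v\cdot\grad_x\rho$ with $\la g\ra=0$ forces $g=-b^*\cdot\grad_x\rho$ by the Fredholm alternative (this is where the extra hypothesis $\cL_xb^*=v$ enters), and then weak lower semicontinuity of the continuous nonnegative quadratic form $h\mapsto\la h\cL_xh\ra$ (continuity holds since $a_A\le C_K$ by (H2)) gives the same lower bound on the limiting dissipation. This is a legitimate and arguably cleaner route on $A$; the price is that the paper's version also delivers, via the equality case in Cauchy--Schwarz, the precise structure of $q$ without presupposing it, and its pointwise inequality is what makes the final upgrade to strong convergence of the fluctuation essentially automatic. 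Your coercivity argument $\|g_\eps-g\|^2\le 2C_K\la(g_\eps-g)\cL_x(g_\eps-g)\ra$ for that upgrade is fine on $A$. Two small points you gloss over: the convergence $\int_\Om\la f_\eps(t,\cdot,\cdot)\ra^2dx\to\int_\Om\rho(t,x)^2dx$ \emph{for each fixed} $t$ requires the equicontinuity estimate (\ref{BndDf/Dt})--(\ref{UnifWCv<f>}) from the weak proof, not just weak-$*$ convergence in $L^\infty_t$; and the theorem has only parts (a) and (b).

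The one place your plan would actually fail is the region $B$. You assert that ``(H3) or (H4) supplies analogous control on $B$'' for $g_\eps$ and later propose to ``force $g_\eps\to0$ strongly on $\bR_+\times B\times\bR^N$''. Neither is available: under (H3) the entropy bound only yields
\[
\|f_\eps-\la f_\eps\ra\|_{L^2(B\times\bR^N)}^2\le\eps^2\,\Supess_{(x,v)\in B\times\bR^N}\int_{\bR^N}\frac{d\mu(w)}{k_\eps(x,v,w)}\;\|f^{in}\|^2=o(1)\,,
\]
which does not bound $g_\eps=\eps^{-1}(f_\eps-\la f_\eps\ra)$ in $L^2(B)$, and under (H4) one may have $k_\eps\equiv0$ on $B$, so the collision term gives no control there at all. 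The correct move --- and what the paper does --- is simply to drop the nonnegative $B$-contribution to the dissipation in the lower bound of the squeezing argument (the matching on $A$ alone already saturates $\|\rho^{in}\|^2$), and to establish the fluctuation convergence $\eps^{-1}(f_\eps-\la f_\eps\ra)\to-b^*\cdot\grad_x\rho$ only in $L^2(\bR_+\times A\times\bR^N;dtdxd\mu)$, which is where $b^*$ is defined and where the paper's proof actually concludes. So your instinct that $B$ is the main obstacle is right, but the resolution is to restrict the second convergence statement to $A$, not to prove decay of $g_\eps$ on $B$.
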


\smallskip
Notice that the strong diffusion limit theorem (Theorem \ref{T-SDiffApprox}) involves the condition $\cL_xb^*(x,v)=v$, implying that the diffusion matrix $M(x)$ is symmetric. On the contrary,
the weak diffusion limit theorem (Theorem \ref{T-WDiffApprox}) applies to situations where $M(x)$ may fail to be symmetric. At the time of this writing, we do not know whether strong
convergence in the diffusion limit can be obtained with this level of generality in cases where the diffusion $M(x)$ is not symmetric for a.e. $x\in A$. 

\subsection{Remarks on Theorem \ref{T-WDiffApprox}}\lb{SS-RmksW}
%%%%%%%%%%%%%%%%%%%%%%%%%%%%%%%%%%%%%%%%%%%%%%%%%%%%%%%%%%%%%%%%%%%%%%%%%%%%%%%%%%%%%%%%%%%%%%%%

The class of collision integrals $\cL_x$ considered here is obviously more general than in \cite{BSS}. In \cite{BSS}, it is assumed that the measure $\mu$ is the uniform probability measure on 
the set $V$ of admissible velocities, that can be a ball, or a sphere, or a spherical annulus centered at the origin in $\bR^N$. The scattering kernel $k_\eps(x,v,w)$ is independent of $x$ and
$\eps$, and is of the form 
$$
k_\eps(x,v,w)=\si\ka(v,w)\,,
$$
where $\si>0$ and
$$
0\le\frac1C\le\ka(v,w)=\ka(w,v)\le C\hbox{ for a.e. }(v,w)\in V\times V
$$
and 
$$
\int_V\ka(v,w)dw=1\quad\hbox{ for a.e. }v\in V\,,
$$ 
for some positive constant $C$. Furthermore, the main result in \cite{BSS} assumes that
$$
\ka(Rv,Rv')=\ka(v,v')\hbox{ for a.e. }(v,v')\in V\times V\,,\hbox{ for all }R\in O_N(\bR)\,.
$$
Under this assumption, the vector field $b^*$ is of the form $b^*(x,v)=\b^*(|v|)v$ for some real-valued measurable function defined a.e. on $\bR_+$ (see Lemma 4.2.4 in \cite{AllaireGolse} or \cite{DesvilleFG}
for an analogous result in a more complex situation), and the diffusion matrix field is of the form $M(x)=mI$, where $m>0$ is a positive constant (see the formulas (43)-(44)
in \cite{BSS}).

Assumption (H1) is obviously satisfied if $k_\eps(x,v,w)=0$ for $dxd\mu(v)d\mu(w)$-a.e. $(x,v,w)\in B\times\bR^N\times\bR^N$, or if $k_\eps(x,v,w)=O(\eps)$ on $B$. However the assumption 
used in the present paper is obviously much more general. For instance, it is satisfied if one has $k_\eps(x,v,w)=O(|\ln\eps|^{-\g_l})$ on $B_l$ with $\g_l>0$ for each $l=1,\ldots,m$. The Hilbert 
expansion method used in \cite{BSS} does not apply to this situation, and therefore cannot be used on the problem considered here in its fullest generality. The treatment of nondiffusive
embedded objects in \cite{Bal2002} assumes that $k_\eps=O(\eps)$ in $B$ (see \cite{Bal2002} on p. 1683), a situation much less general than the one considered here, which can be treated 
with the Hilbert expansion method and leads to a diffusion system analogous to (\ref{LimPDE}).

Even in the nondegenerate case where $B=\varnothing$, observe that our assumptions on the transition kernel $k_\eps$ do not imply that the vector field $b^*$ in Theorem \ref{T-WDiffApprox}
(b) depends smoothly on $x$. This again excludes the possibility of using the Hilbert expansion as in \cite{BSS} to establish the validity of the diffusion limit. Accordingly, the diffusion matrix 
field $M$ defined in Theorem \ref{T-WDiffApprox} (c) is in general not even continuous. The classical interpretation of the diffusion equation with diffusion matrix $M$ in terms of the associated 
stochastic differential equation fails in such a case (see for instance section 5.1 and Remark 5.1.6 in \cite{SV}). 

Yet, even though the Hilbert expansion method cannot be used on the scaled linear Boltzmann equation (\ref{ScalLinB}) with the level of generality implied by assumptions (H1)-(H4), notice
that the second convergence statement in Theorem \ref{T-SDiffApprox} provides information analogous to the knowledge of the next to leading order term in Hilbert's expansion. (This is easily
seen for instance on formula (\ref{P1Approx}) in the special case of the radiative transfer equation with isotropic scattering.) At variance with the usual diffusion approximation theory, this
information is available in the diffusion region only (i.e. for $x\in A$).

In the special case where the diffusion matrix has a type I discontinuity across some smooth surface is equivalent to a transmission problem for two diffusion equations on each side of the 
discontinuity surface, with continuity of the solution and of the normal component of the current across the discontinuity surface. See for instance \cite{LionsJLMReal} on p. 107 or Lemma 1.1 
in \cite{DesvFGRicci} for a discussion of this well known issue.

If $B_l$ is convex for $l=1,\ldots,m$ and $\mu$ is of the form $d\mu(v)=r(|v|)dv$ or $\mu$ is the uniform probability measure on a sphere included in $\bR^N$ centered at the origin, the condition
(H4) is obviously satisfied. Indeed, for each $x,y\in\d B_l$, the segment $[x,y]$ is included in $B_l$, so that $g(x)=g(y)$ for a.e. $x,y\in\d B_l$.

But even when $B_l$ is convex, the condition (H4) may fail to be satisfied for some measures $\mu$. For instance, assume that $N=2$,  and take $B_l=\{x\in\bR^2\hbox{ s.t. }|x|\le 1\}$. Denote 
by $(e_1,e_2)$ the canonical basis of $\bR^2$, and let 
$$
\mu=\tfrac14(\de_{e_1}+\de_{-e_1}+\de_{e_2}+\de_{-e_2})\,.
$$
For $x=(x_1,x_2)\in\d B_l$, one has $\tau_l(x,\pm e_1)=2|x_1|$ and $\tau_l(x,\pm e_2)=2|x_2|$, so that 
$$
\ba
(-x_1,x_2)+\tau_l(x,e_1)e_1=(x_1,x_2)\,,\qquad(x_1,x_2)-\tau_l(x,e_1)e_1=(-x_1,x_2)\,,
\\
(x_1,-x_2)+\tau_l(x,e_2)e_2=(x_1,x_2)\,,\qquad(x_1,x_2)-\tau_l(x,e_2)e_2=(x_1,-x_2)\,.
\ea
$$
Thus $g(x)=|x_1|$ or $g(x)=|x_2|$ are not a.e. constant on $\d B_l$ and yet satisfy the condition
$$
g(x+\tau_l(x,v)v)=g(x)\quad\hbox{ for }dxd\mu(v)-\hbox{a.e. }(x,v)\in\d B_l\times\bR^2\,.
$$

Comparing Theorems \ref{T-WDiffApprox} and \ref{T-SDiffApprox} with the result in \cite{GJL2} is a more delicate issue. We recall that the problem considered in \cite{GJL2} involves the 
juxtaposition of a medium where the collision cross-section is of order $1$ and a highly collisional medium, where the collision cross section is of order $1/\eps$. The setting is one dimensional, 
but extensions to higher dimensions are possible and discussed in \cite{GJL2}. The main result in \cite{GJL2} is a proof of the validity of a domain decomposition strategy where the highly 
collisional medium is treated by the diffusion equation, with a boundary layer term that is the solution of a Milne problem (see \cite{BSS}) to accurately describe the interface. The interested reader 
is referred to \cite{GJL2} for a more accurate description of this domain decomposition algorithm. 

At first sight, the situation considered in the present paper is of the same type, as the case of a transition kernel $k_\eps$ such that $k_\eps(x,v,w)=O(1)$ for a.e. $x\in B$ is covered by our 
assumptions. Yet the result in Theorems \ref{T-WDiffApprox} and \ref{T-SDiffApprox}  obviously does not involve any sophisticated treatment of the interface between $A$ and $B$ that would 
require solving a Milne problem. The difference between both results comes from the type of boundary data considered in \cite{GJL2} and here. In the situation considered in Theorems
\ref{T-WDiffApprox} and \ref{T-SDiffApprox}, the distribution function of particles entering each connected component of $B$, i.e. of the region where the collision cross-section is of order $1$, 
is independent of the variable $v$. For such boundary data, one easily verifies that the boundary layer matching the kinetic and the diffusion domain in \cite{GJL2} is trivial to leading order.

Finally, notice that the time scale on which the evolution of the linear Boltzmann equation (\ref{ScalLinB}) is observed is the same in the diffusive as well as in the nondiffusive parts of $\Om$.
This may be slightly surprising at first sight, since the diffusion approximation involves a near equilibrium regime, and therefore needs to be observed on a time scale much longer than the 
original time scale for the linear Boltzmann equation. One of the difficulties in matching diffusive and nondiffusive regions in the theory of the linear Boltzmann equation is that diffusion and
transport are phenomena evolving on different time scales. However, in the situation considered here, the boundary of each one of the connected components $B_l$ of $B$ does not touch
$\d\Om$. Therefore, the transport process in each $B_l$ is driven by the surrounding diffusive region. This explains why our asymptotic theory in Theorems \ref{T-WDiffApprox} and 
\ref{T-SDiffApprox} involves the same time scale in the diffusive and in the nondiffusive regions.

%%%%%%%%%%%%%%%%%%%%%%%%%%%%%%%%%%%%%%%%%%%%%%%%%%%%%%%%%%%%%%%%%%%%%%%%%%%%%%%%%%%%%%%%%%%%%%%%

\section{The collision integral and the diffusion matrix}\lb{S-CollInt}

%%%%%%%%%%%%%%%%%%%%%%%%%%%%%%%%%%%%%%%%%%%%%%%%%%%%%%%%%%%%%%%%%%%%%%%%%%%%%%%%%%%%%%%%%%%%%%%%

\subsection{Properties of the integral operator $\cL_x$}\lb{SS-PropLx}
%%%%%%%%%%%%%%%%%%%%%%%%%%%%%%%%%%%%%%%%%%%%%%%%%%%%%%%%%%%%%%%%%%%%%%%%%%%%%%%%%%%%%%%%%%%%%%%%

Henceforth we denote
$$
\la\phi\ra:=\int_{\bR^N}\phi(v)d\mu(v)\,,\quad\hbox{ for all }\phi\in L^1(\bR^N,d\mu)\,.
$$

\smallskip
\begin{lem}\lb{L-PropLx}
Assume that $\mu$ is a Borel probability measure on $\bR^N$, and that $k_\eps$ is a nonnegative $dxd(\mu\otimes\mu)$-measurable function defined $dxd(\mu\otimes\mu)$-a.e. on 
$\Om\times\bR^N\times\bR^N$ satisfying (\ref{SemDet}). Assume further that the function $a_\eps\equiv a_\eps(x,v)$ defined in (\ref{Defa}) satisfies the condition
\be\lb{aBdd}
a_\eps\in L^\infty(\Om\times\bR^N,dxd\mu)\,.
\ee
(a) The integral operators $\cL_x$ and $\cK_x$ are bounded on $L^2(\bR^N,\mu)$ for a.e. $x\in\Om$, with
$$
\|\cK_x\|_{\cL(L^2(\bR^N,\mu))}\le\|a_\eps(x,\cdot)\|_{L^\infty(\bR^N,\mu)}\,.
$$
(b) The adjoints of $\cK_x$ and $\cL_x$ are given by the formulas
$$
\left\{
\ba
{}&\cK^*_x\phi(v)=\int_{\bR^N}k_\eps(x,w,v)\phi(w)d\mu(w)\hbox{ and }
\\
&\cL^*_x\phi(v)=\int_{\bR^N}k_\eps(x,w,v)(\phi(v)-\phi(w))d\mu(w)
\ea
\right.
$$
for a.e. $x\in\Om$.

\noindent
(c) For each $\phi\in L^2(\bR^N;d\mu)$, and for a.e. $x\in\Om$,
$$
\la\phi\cL_x\phi\ra=\tfrac12\iint_{\bR^N\times\bR^N}k_\eps(x,v,w)(\phi(v)-\phi(w))^2d\mu(v)d\mu(w)\,.
$$
(d)  For a.e. $x\in\Om$,
$$
\{\hbox{ functions a.e. constant on }\bR^N\}\subset\bR\subset\Ker(\cL_x)\cap\Ker(\cL^*_x)\,;
$$
if in addition $k_\eps(x,v,w)>0$ for $d\mu(v)d\mu(w)$-a.e. $(v,w)\in\bR^N\times\bR^N$, then
$$
\Ker(\cL_x)=\Ker(\cL^*_x)=\{\hbox{ functions a.e. constant on }\bR^N\}=\bR\,.
$$
\end{lem}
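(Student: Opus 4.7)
The plan is to treat the four parts in order, using only (\ref{SemDet}), the $L^\infty$-bound (\ref{aBdd}), and elementary Cauchy--Schwarz and Fubini manipulations. For part (a), I apply Cauchy--Schwarz to $\cK_x\phi(v)$ by splitting the kernel as $\sqrt{k_\eps}\cdot\sqrt{k_\eps}$; one factor immediately yields $a_\eps(x,v)$ while the other remains inside the inner integral. Squaring, integrating in $v$ against $d\mu$, swapping the order of integration, and invoking (\ref{SemDet}) to replace $\int k_\eps(x,v,w)d\mu(v)$ by $a_\eps(x,w)$ produces the Schur-type bound $\|\cK_x\phi\|_{L^2}\le\|a_\eps(x,\cdot)\|_{L^\infty}\|\phi\|_{L^2}$. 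Boundedness of $\cL_x = a_\eps - \cK_x$ on $L^2$ is then immediate from (\ref{aBdd}). Part (b) is a direct Fubini computation: expand $\la\psi,\cK_x\phi\ra$ as a double integral and rename the bound variable to read off $\cK_x^*$; for $\cL_x^*$, use that multiplication by $a_\eps(x,\cdot)$ is self-adjoint and rewrite $a_\eps(x,v)\phi(v) = \int k_\eps(x,w,v)\phi(v)d\mu(w)$ via (\ref{SemDet}) to package the difference into the stated form.

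For part (c), the key observation is that $\la\phi\cL_x\phi\ra = \la\phi\cL_x^*\phi\ra$ for real-valued $\phi$, so that
\[
2\la\phi\cL_x\phi\ra = \la\phi,(\cL_x+\cL_x^*)\phi\ra.
\]
Substituting the formulas from part (b) produces two double integrals; renaming $v\leftrightarrow w$ in one of them and adding aligns the integrands so that the pointwise identity $\phi(v)(\phi(v)-\phi(w)) + \phi(w)(\phi(w)-\phi(v)) = (\phi(v)-\phi(w))^2$ gives the stated Dirichlet-form expression.

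Part (d) splits into two directions. The inclusion $\bR \subset \Ker(\cL_x) \cap \Ker(\cL_x^*)$ is immediate from the defining formulas in (b), since constants annihilate the difference $\phi(v)-\phi(w)$. For the converse under the positivity assumption, any $\phi \in \Ker(\cL_x)$ satisfies $\la\phi\cL_x\phi\ra = 0$, so by (c) the integrand $k_\eps(x,v,w)(\phi(v)-\phi(w))^2$ vanishes $\mu\otimes\mu$-a.e.; since $k_\eps>0$ $\mu\otimes\mu$-a.e., this forces $\phi(v)=\phi(w)$ for $\mu\otimes\mu$-a.e.\ $(v,w)$, from which a routine Fubini/slicing argument yields that $\phi$ is $\mu$-a.e.\ constant. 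The analogous statement for $\cL_x^*$ follows either by symmetry of the Dirichlet form (the same computation as in (c) applied to $\cL_x^*$ produces the same integrand, up to renaming $v\leftrightarrow w$) or by noting that positivity of $k_\eps(x,v,w)$ is equivalent to positivity of $k_\eps(x,w,v)$ after this renaming. The only point requiring any care is the bookkeeping in (a) and (c), where (\ref{SemDet}) is invoked exactly once to trade the two possible ways of integrating the kernel against $d\mu$; no compactness, continuity, or further regularity of $k_\eps$ is needed.
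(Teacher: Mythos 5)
Your proof is correct and follows essentially the same route as the paper's: for (a) the paper simply cites Schur's lemma where you reprove it by the same Cauchy--Schwarz splitting of $\sqrt{k_\eps}\cdot\sqrt{k_\eps}$ plus (\ref{SemDet}), and for (c) the paper symmetrizes the $a_\eps\phi^2$ terms directly via (\ref{SemDet}) whereas you reach the identical symmetrization by writing $2\la\phi\cL_x\phi\ra=\la\phi(\cL_x+\cL_x^*)\phi\ra$ and swapping $v\leftrightarrow w$. Parts (b) and (d) coincide with the paper's argument, including the closing observation that $(v,w)\mapsto k_\eps(x,w,v)$ satisfies the same hypotheses as $k_\eps$, so the characterization of the kernel transfers to $\cL_x^*$.
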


\begin{proof}
Statement (a) follows from Schur's lemma (Lemma 18.1.12 in \cite{Horm3} or Lemma 1 in \S 2 of chapter XXI in \cite{DL}). The formula for $\cK_x$ in statement (b) and the inclusion in statement 
(d) are obvious.

As for the formula for $\cL^*_x$ in statement (b), observe that
$$
\ba
\cL_x\phi(v)+\cK_x\phi(v)&=\int_{\bR^N}k_\eps(x,v,w)\phi(v)d\mu(w)
\\
&=\int_{\bR^N}k_\eps(x,w,v)\phi(v)d\mu(w)=a_\eps(x,v)\phi(v)
\ea
$$
for $dxd\mu$-a.e. $(x,v)\in\Om\times\bR^N$ by the semi-detailed balance assumption (\ref{SemDet}).

For each $\phi\in L^2(\bR^N;d\mu)$ and a.e. in $x\in\Om$
$$
\ba
\la\phi\cL_x\phi\ra&=\iint_{\bR^N\times\bR^N}k_\eps(x,v,w)(\phi(v)^2-\phi(v)\phi(w))d\mu(v)d\mu(w)
\\
&=\int_{\bR^N}a_\eps(x,v)\phi(v)^2d\mu(v)-\iint_{\bR^N\times\bR^N}k_\eps(x,v,w)\phi(v)\phi(w)d\mu(v)d\mu(w)
\\
&=\tfrac12\int_{\bR^N}a_\eps(x,v)\phi(v)^2d\mu(v)+\tfrac12\int_{\bR^N}a_\eps(x,w)\phi(w)^2d\mu(v)
\\
&-\iint_{\bR^N\times\bR^N}k_\eps(x,v,w)\phi(v)\phi(w)d\mu(v)d\mu(w)
\\
&=\iint_{\bR^N\times\bR^N}k_\eps(x,v,w)\tfrac12(\phi(v)^2+\phi(w)^2)d\mu(v)d\mu(w)
\\
&-\iint_{\bR^N\times\bR^N}k_\eps(x,v,w)\phi(v)\phi(w)d\mu(v)d\mu(w)
\\
&=\tfrac12\iint_{\bR^N\times\bR^N}k_\eps(x,v,w)(\phi(v)-\phi(w))^2d\mu(v)d\mu(w)
\ea
$$
by Fubini's theorem and the semi-detailed balance assumption (\ref{SemDet}). This proves statement (c).

By statement (c), if $\phi\in L^2(\bR^N;d\mu)$ satisfies $\cL_x\phi=0$, then
$$
0=\la\phi\cL_x\phi\ra=\tfrac12\iint_{\bR^N\times\bR^N}k_\eps(x,v,w)(\phi(v)-\phi(w))^2d\mu(v)d\mu(w)\,.
$$
Therefore
$$
k_\eps(x,v,w)(\phi(v)-\phi(w))=0\quad\hbox{ for }d\mu(v)d\mu(w)-\hbox{ a.e. }(v,w)\in\bR^N\times\bR^N
$$
so that
$$
\phi(v)-\phi(w)=0\quad\hbox{ for }d\mu(v)d\mu(w)-\hbox{ a.e. }(v,w)\in\bR^N\times\bR^N\,.
$$
Averaging in $w$ shows that
$$
\phi(v)=\la\phi\ra\quad\hbox{ for }d\mu(v)-\hbox{ a.e. }v\in\bR^N\,,
$$
so that
$$
\Ker(\cL_x)\subset\{\hbox{ functions a.e. constant on }\bR^N\}=\bR\,.
$$
Since the function $(v,w)\mapsto k_\eps(x,w,v)$ satisfies the same properties as $k_\eps$, 
$$
\Ker(\cL_x)\subset\{\hbox{ functions a.e. constant on }\bR^N\}=\bR\,,
$$
and the proof is complete.
\end{proof}

\subsection{The Fredholm alternative: proof of Theorem \ref{T-WDiffApprox} (b)-(c)}\lb{SS-FredLx}
%%%%%%%%%%%%%%%%%%%%%%%%%%%%%%%%%%%%%%%%%%%%%%%%%%%%%%%%%%%%%%%%%%%%%%%%%%%%%%%%%%%%%%%%%%%%%%%%

\begin{prop}\lb{P-FredLx}
Assume that the Borel probability measure $\mu$ satisfies (\ref{MeanV=0})-(\ref{Mom2mu}), and that $k_\eps$ is a nonnegative $dxd(\mu\otimes\mu)$-measurable function defined 
$dxd(\mu\otimes\mu)$-a.e. on $\Om\times\bR^N\times\bR^N$ satisfying (\ref{SemDet}) and (H2).

\smallskip
\noindent
(a) For a.e. $x\in A$ and each $\phi\in L^2(\bR^N,d\mu)$
$$
\ba
\|\phi-\la\phi\ra\|_{L^2(\bR^N;d\mu)}\le 2C_K\|\cL_x\phi\|_{L^2(\bR^N,d\mu)}\,,
\\
\|\phi-\la\phi\ra\|_{L^2(\bR^N;d\mu)}\le 2C_K\|\cL_x^*\phi\|_{L^2(\bR^N,d\mu)}\,.
\ea
$$
(b) For a.e. $x\in A$, the operators $\cL_x$ and $\cL_x^*$ are bounded Fredholm operators on $L^2(\bR^N;d\mu)$ with
$$
\Ran(\cL_x)=\bR^\bot\quad\hbox{ and }\Ran(\cL_x^*)=\bR^\bot\,.
$$

\noindent
(c) For a.e. $x\in A$, there exists unique $\bR^N$-valued vector fields $b(x,\cdot)$ and $b^*(x,\cdot)$ in $L^2(\bR^N,d\mu)$ such that
$$
\left\{
\ba
\cL_xb(x,v)&=v\qquad\hbox{ and }\la b(x,\cdot)\ra=0
\\
\cL_x^*b^*(x,v)&=v\qquad\hbox{ and }\la b^*(x,\cdot)\ra=0\,,
\ea
\right.
$$
(d) For a.e. $x\in A$ and all $i,j=1,\ldots,N$, one has
$$
\int_{\bR^N}b^*_i(x,v)v_jd\mu(v)=\int_{\bR^N}v_ib_j(x,v)d\mu(v)\,.
$$
\end{prop}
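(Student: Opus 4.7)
The plan is to establish a Poincaré-type coercivity estimate for $\cL_x$ on the hyperplane $\bR^\bot\subset L^2(\bR^N,d\mu)$, then to read off the Fredholm structure directly from this estimate together with the adjoint formulas of Lemma \ref{L-PropLx}, and finally to invoke duality to solve the cell problems $\cL_x b = v$ and $\cL_x^* b^* = v$ componentwise.

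For part (a), I would start from the dissipation identity of Lemma \ref{L-PropLx}(c),
$$\la\phi\cL_x\phi\ra=\tfrac12\iint k_A(x,v,w)(\phi(v)-\phi(w))^2 d\mu(v)d\mu(w),$$
and control $\phi-\la\phi\ra$ by the symmetric difference $\phi(v)-\phi(w)$. Writing $\phi(v)-\la\phi\ra=\int(\phi(v)-\phi(w))d\mu(w)$ and inserting the factor $k_A^{1/2}\cdot k_A^{-1/2}$ under the integral, Cauchy-Schwarz gives
$$|\phi(v)-\la\phi\ra|^2\le \int\frac{d\mu(w)}{k_A(x,v,w)}\cdot\int k_A(x,v,w)(\phi(v)-\phi(w))^2 d\mu(w).$$
Integrating in $v$ and bounding the first factor by $C_K$ via (H2) yields $\|\phi-\la\phi\ra\|_{L^2}^2 \le 2C_K\la\phi\cL_x\phi\ra$. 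Since (\ref{SemDet}) forces $\la\cL_x\phi\ra=0$, I can replace $\phi$ by $\phi-\la\phi\ra$ in the pairing and conclude by a final Cauchy-Schwarz. The $\cL_x^*$ bound is identical once one notes, via Lemma \ref{L-PropLx}(b), that $\cL_x^*$ has the same structure with the transposed kernel $k_A(x,w,v)$, whose reciprocal is also controlled by $C_K$ thanks to the third term in (H2).

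For part (b), the coercivity of part (a) immediately gives $\Ker(\cL_x)=\Ker(\cL_x^*)=\bR$ together with closedness of $\Ran(\cL_x)$ and $\Ran(\cL_x^*)$: any Cauchy sequence in the range lifts (after subtracting the mean) to a Cauchy sequence of preimages via the inequality of (a). Combining this with the inclusion $\Ran(\cL_x)\subset\bR^\bot$ (from $\la\cL_x\phi\ra=0$) and the closed range theorem produces $\Ran(\cL_x)=\Ker(\cL_x^*)^\bot=\bR^\bot$, and symmetrically for $\cL_x^*$. Part (c) then follows: assumptions (\ref{MeanV=0})-(\ref{Mom2mu}) put each coordinate function $v_i$ in $L^2(\bR^N,d\mu)\cap\bR^\bot=\Ran(\cL_x)\cap\Ran(\cL_x^*)$, so preimages $b_i(x,\cdot)$ and $b_i^*(x,\cdot)$ exist, and the zero-mean normalization selects a unique one in each case since $\Ker(\cL_x)=\Ker(\cL_x^*)=\bR$.

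Part (d) is then a one-line adjoint calculation:
$$\int b_i^*(x,v) v_j d\mu(v) = \la b_i^*, \cL_x b_j\ra = \la\cL_x^* b_i^*, b_j\ra = \la v_i, b_j\ra = \int v_i b_j(x,v) d\mu(v).$$
I expect the only genuinely nontrivial step to be the weighted Cauchy-Schwarz argument producing the Poincaré-type inequality in (a); once that is in hand, (b)--(d) are standard bookkeeping with the closed range theorem, the adjoint identities of Lemma \ref{L-PropLx}(b), and the semi-detailed balance condition (\ref{SemDet}).
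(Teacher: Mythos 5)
Your proposal is correct and follows essentially the same route as the paper: the weighted Cauchy--Schwarz argument with the $k_A^{1/2}k_A^{-1/2}$ insertion to get the Poincar\'e-type bound, the lifting of Cauchy sequences (after subtracting means) to establish closed range, the closed range theorem to identify $\Ran(\cL_x)=\Ker(\cL_x^*)^\bot=\bR^\bot$, and the one-line adjoint computation for (d). No gaps.
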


\smallskip
Notice that statement (b) in Theorem \ref{T-WDiffApprox} is exactly the part of statement (b) in Proposition \ref{P-FredLx} concerning the vector field $b^*$.

\begin{proof}
Set $\cL_x\phi=\psi$; by statement (c) in Lemma \ref{L-PropLx}
$$
\ba
\la\phi\psi\ra=\la\phi\cL_x\phi\ra=\tfrac12\iint_{\bR^N\times\bR^N}k_\eps(x,v,w)(\phi(v)-\phi(w))^2d\mu(v)d\mu(w)\ge 0\,.
\ea
$$
By the Cauchy-Schwarz inequality, for a.e. $x\in A$, 
$$
\ba
|\phi(v)-\la\phi\ra|^2&=\left(\int_{\bR^N}(\phi(v)-\phi(w))d\mu(w)\right)^2
\\
&\le\int_{\bR^N}\frac{d\mu(w)}{k_A(x,v,w)}\int_{\bR^N}k_A(x,v,w)(\phi(v)-\phi(w))^2d\mu(w)
\ea
$$
so that
$$
\|\phi-\la\phi\ra\|_{L^2(\bR^N;d\mu)}^2\le C_K\iint_{\bR^N\times\bR^N}k_A(x,v,w)(\phi(v)-\phi(w))^2d\mu(v)d\mu(w)=2C_K\la\phi\psi\ra\,.
$$
Next
$$
\la\psi\ra=\la\cL_x\phi\ra=\la(\cL^*_x1)\phi\ra=0
$$ 
since $\cL_x^*1=0$ by Lemma \ref{L-PropLx} (d), so that
$$
\la\phi\psi\ra=\la(\phi-\la\phi\ra)\psi\ra\le\|\psi\|_{L^2(\bR^N,d\mu)}\|\phi-\la\phi\ra\|_{L^2(\bR^N,d\mu)}
$$ 
by the Cauchy-Schwarz inequality. Putting together the last two inequalities, we obtain the bound
$$
\|\phi-\la\phi\ra\|_{L^2(\bR^N;d\mu)}\le 2C_K\|\psi\|_{L^2(\bR^N,d\mu)}\,.
$$
Since the function $k_A(x,w,v)$ satisfies the same assumptions as $k_A$, the analogous inequality for the adjoint operator $\cL^*_x$ immediatly follows, which proves statement (a). 

Pick $x\in A$ such that $a_A(x,\cdot)\in L^\infty(\bR^N;d\mu)$ and the inequalities in statement (a) are verified. By Lemma \ref{L-PropLx} (a)-(b), the operators $\cL_x$ and $\cL_x^*$ are 
bounded on $L^2(\bR^N;d\mu)$. Besides $\Ran(\cL_x)$ is closed, shown by the following argument. Let $\psi\in L^2(\bR^N,d\mu)$ and $\phi_n$ be a sequence of $L^2(\bR^N,d\mu)$ 
such that
$$
\cL_x\phi_n\to\psi\quad\hbox{ in }L^2(\bR^N,d\mu)\hbox{ as }n\to\infty\,.
$$
In particular $\cL_x\phi_n$ is a Cauchy sequence; since 
$$
\|(\phi_n-\la\phi_n\ra)-(\phi_m-\la\phi_m\ra)\|_{L^2(\bR^N,d\mu)}\le 2C_K\|\cL_x\phi_n-\cL_x\phi_m\|_{L^2(\bR^N,d\mu)}
$$
we conclude that $\phi_n-\la\phi_n\ra$ is a Cauchy sequence in $L^2(\bR^N,d\mu)$. Therefore
$$
\phi_n-\la\phi_n\ra\to\tilde{\phi}\quad\hbox{�in }L^2(\bR^N,d\mu)\hbox{ as }n\to\infty
$$
so that
$$
\cL_x(\phi_n-\la\phi_n\ra)=\cL_x\phi_n\to\cL_x\tilde\phi\quad\hbox{� in }L^2(\bR^N,d\mu)\hbox{ as }n\to\infty\,.
$$
Hence $\psi=\cL_x\tilde\phi\in\Ran(\cL_x)$. By the same token, $\Ran(\cL^*_x)$ is closed. Applying Corollary II.17 in \cite{Brezis} and Lemma \ref{L-PropLx} (d) shows that
$$
\Ran(\cL_x)^\bot=\Ker(\cL_x^*)=\bR\quad\hbox{ and }\Ran(\cL_x^*)^\bot=\Ker(\cL_x)=\bR\,.
$$
By Proposition II.12 in \cite{Brezis}, we conclude that $\cL_x$ and $\cL_x^*$ are Fredholm operators with
$$
\Ran(\cL_x)=\Ker(\cL_x^*)^\bot=\bR^\bot\quad\hbox{ and }\Ran(\cL_x^*)=\Ker(\cL_x)^\bot=\bR^\bot\,.
$$

The existence and uniqueness of the vector fields $b$ and $b^*$ in statement (c) follows from the orthogonality condition (\ref{MeanV=0}) and the Fredholm alternative in statement (b).

Finally, for a.e. $x\in A$ and all $i,j=1,\ldots,N$, one has
$$
\ba
\int_{\bR^N}b^*_i(x,v)v_jd\mu(v)&=\int_{\bR^N}b^*_i(x,v)\cL_xb_j(x,v)v_jd\mu(v)
\\
&=\int_{\bR^N}\cL^*_xb^*_i(x,v)b_j(x,v)v_jd\mu(v)=\int_{\bR^N}v_ib_j(x,v)d\mu(v)\,,
\ea
$$
which proves statement (d). 
\end{proof}

\smallskip
It remains to prove statement (c) in Theorem \ref{T-WDiffApprox}.

\begin{proof}[Proof of Theorem \ref{T-WDiffApprox} (c)]
The inequality in Proposition \ref{P-FredLx} (a) implies that
$$
\|b^*_i(x,\cdot)\|_{L^2(\bR^N,d\mu)}\le 2C_K\|\cL^*b^*_i(x,\cdot)\|_{L^2(\bR^N,d\mu)}=2C_K\|v_i\|_{L^2(\bR^N,d\mu)}
$$
for all $i=1,\ldots,N$ and a.e. $x\in A$. This bound implies the first inequality in statement (c) of Theorem \ref{T-WDiffApprox} as a consequence of the definition of the $M_{ij}(x)$ and of
the Cauchy-Schwarz inequality.

As for statement (c), for a.e. $x\in A$ and all $\xi\in\bR^N$,
$$
\ba
(\xi\cdot v)^2&=(\cL_x(\xi\cdot b(x,v)))^2
\\
&=\left(\int_{\bR^N}k_A(x,v,w)(\xi\cdot b(x,v)-\xi\cdot b(x,w))d\mu(w)\right)^2
\\
&\le a_A(x,v)\int_{\bR^N}k_A(x,v,w)(\xi\cdot b(x,v)-\xi\cdot b(x,w))^2d\mu(w)
\ea
$$
by the Cauchy-Schwarz inequality. On the other hand, by Lemma \ref{L-PropLx} (c), 
$$
\ba
\sum_{i,j=1}^NM_{ij}(x)\xi_i\xi_j&=\int_{\bR^N}(\xi\cdot b(x,v))\cL_x(\xi\cdot b(x,v))d\mu(v)
\\
&=\tfrac12\iint_{\bR^N\times\bR^N}k_A(x,v,w)(\xi\cdot b(x,v)-\xi\cdot b(x,w))^2d\mu(v)d\mu(w)\,.
\ea
$$
Hence
$$
\ba
\sum_{i,j=1}^NS_{ij}\xi_i\xi_j&=\int_{\bR^N}(\xi\cdot v)^2d\mu(v)
\\
&\le C_K\iint_{\bR^N\times\bR^N}k_A(x,v,w)(\xi\cdot b(x,v)-\xi\cdot b(x,w))^2d\mu(v)d\mu(w)
\\
&=2C_K\sum_{i,j=1}^NM_{ij}(x)\xi_i\xi_j\,.
\ea
$$
The conclusion follows from the definition of $\b>0$ which implies the inequality
$$
\sum_{i,j=1}^NS_{ij}\xi_i\xi_j\ge\b|\xi|^2\,.
$$
\end{proof}

%%%%%%%%%%%%%%%%%%%%%%%%%%%%%%%%%%%%%%%%%%%%%%%%%%%%%%%%%%%%%%%%%%%%%%%%%%%%%%%%%%%%%%%%%%%%%%%%

\section{Existence and uniqueness theory for the linear Boltzmann equation and for the diffusion problem}\label{S-ProofLinB}

%%%%%%%%%%%%%%%%%%%%%%%%%%%%%%%%%%%%%%%%%%%%%%%%%%%%%%%%%%%%%%%%%%%%%%%%%%%%%%%%%%%%%%%%%%%%%%%%

\subsection{Proof of Theorem \ref{T-WDiffApprox} (a)}\label{SS-ProofLinB}

%%%%%%%%%%%%%%%%%%%%%%%%%%%%%%%%%%%%%%%%%%%%%%%%%%%%%%%%%%%%%%%%%%%%%%%%%%%%%%%%%%%%%%%%%%%%%%%%

Henceforth we denote
$$
\lA\Phi\rA:=\iint_{\bR^N\times\bR^N}\Phi(v,w)d\mu(v)d\mu(w)\,,\quad\hbox{ for all }\Phi\in L^1(\bR^N\times\bR^N,d(\mu\otimes\mu))\,.
$$

\smallskip
\begin{prop}\lb{P-ExUnLinB}
Assume that $k_\eps$ is a nonnegative measurable function defined $dxd(\mu\otimes\mu)$-a.e. on $\Om\times\bR^N\times\bR^N$ satisfying (\ref{SemDet}) and (\ref{aBdd}) with $a_\eps$ 
defined by (\ref{Defa}). For each $\eps>0$ and each $f^{in}\in L^2(\Om\times\bR^N;dxd\mu)$, there exists a unique weak solution of the initial-boundary value problem (\ref{ScalLinB}) in 
the space $C_b(\bR_+;L^2(\Om\times\bR^N;dxd\mu))$. This solution satisfies

\noindent
(a) the continuity equation in the sense of distributions on $\bR_+^*\times\Om$:
$$
\d_t\la f_\eps\ra+\Div_x\frac1\eps\la vf_\eps\ra=0\,;
$$
(b) the ``entropy inequality''
$$
\int_\Om\la f_\eps(t,x,\cdot)^2\ra dx+\int_0^t\int_\Om\lA k_\eps(x,\cdot,\cdot)q_\eps(s,x,\cdot,\cdot)^2\rA dxds\le\int_\Om\la f^{in}(x,\cdot)^2\ra dx
$$
for each $\eps>0$ and each $t\ge 0$, where
$$
q_\eps(t,x,v,w):=\frac1\eps(f_\eps(t,x,v)-f_\eps(t,x,w))\,.
$$
\end{prop}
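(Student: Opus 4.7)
The plan is to build $f_\eps$ by semigroup theory, derive the continuity equation directly from the equation, and obtain the entropy inequality by an energy calculation (which must be justified by approximation).

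\textbf{Step 1 (Existence and uniqueness).} Decompose the generator of the evolution as
$$
G_\eps := -\tfrac1\eps v\cdot\grad_x -\tfrac1{\eps^2}a_\eps +\tfrac1{\eps^2}\cK_x
$$
on the domain $D=\{f\in L^2(\Om\times\bR^N;dxd\mu)\,:\,v\cdot\grad_x f\in L^2,\,f\rstr_{\Ga_-}=0\}$. The ``free'' part $T_\eps:=-\tfrac1\eps v\cdot\grad_x-\tfrac1{\eps^2}a_\eps$ generates a contraction semigroup on $L^2$: advection with the absorbing boundary condition is dissipative (the outgoing trace on $\Ga_+$ contributes a nonnegative boundary term), and the bounded nonnegative multiplier $\tfrac1{\eps^2}a_\eps$ is itself dissipative. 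By Lemma \ref{L-PropLx}(a), $\tfrac1{\eps^2}\cK_x$ is a bounded operator on $L^2(\Om\times\bR^N;dxd\mu)$, so by the bounded perturbation theorem $G_\eps$ generates a $C_0$-semigroup $S_\eps(t)$. Uniqueness follows from the a priori energy estimate in Step 3 applied to the difference of two solutions with zero initial data.

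\textbf{Step 2 (Continuity equation (a)).} Testing (\ref{ScalLinB}) against $\varphi(t,x)$ independent of $v$ and integrating in $d\mu(v)$ yields, in $\cD'(\bR_+^*\times\Om)$,
$$
\eps\d_t\la f_\eps\ra +\Div_x\la v f_\eps\ra +\tfrac1\eps\la \cL_x f_\eps\ra = 0.
$$
By Lemma \ref{L-PropLx}(d), $\cL_x^*1=0$, hence $\la\cL_x f_\eps\ra=\la f_\eps\cL_x^*1\ra=0$. Dividing by $\eps$ gives (a).

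\textbf{Step 3 (Entropy inequality (b)).} Formally, multiplying (\ref{ScalLinB}) by $f_\eps$ and integrating on $\Om\times\bR^N$ gives
$$
\tfrac{\eps}{2}\tfrac{d}{dt}\int_\Om\la f_\eps^2\ra dx +\tfrac12\int_{\Ga_+}(v\cdot n_x)f_\eps^2\,d\si d\mu +\tfrac1\eps\int_\Om\la f_\eps\cL_xf_\eps\ra dx=0.
$$
Lemma \ref{L-PropLx}(c) rewrites the collision term as $\tfrac1{2\eps}\int_\Om\lA k_\eps(f_\eps(v)-f_\eps(w))^2\rA dx=\tfrac{\eps}{2}\int_\Om\lA k_\eps q_\eps^2\rA dx$; the boundary term is nonnegative because $f_\eps\rstr_{\Ga_-}=0$. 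Dropping the boundary contribution, dividing by $\eps/2$, and integrating from $0$ to $t$ yields the announced inequality.

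\textbf{Main obstacle.} The multiplication by $f_\eps$ in Step 3 is not legitimate for a generic weak solution, since $v\cdot\grad_x f_\eps$ is only a distribution and the traces on $\Ga_\pm$ need justification. The standard remedy is to approximate $f^{in}$ by a sequence $f^{in}_n\in D$ of smoother data, for which $f_{\eps,n}=S_\eps(t)f^{in}_n$ lies in $C^1(\bR_+;L^2)\cap C(\bR_+;D)$ and the above energy identity is exact (using the Green/Cessenat trace formula for the transport operator); one then passes to the limit $n\to\infty$ in $L^2$, keeping equality for the time-derivative term while the collision and boundary terms are nonnegative and lower semicontinuous, yielding the inequality. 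This approximation/trace argument is the only delicate point; the rest consists of the algebraic identities already established in Section 3.
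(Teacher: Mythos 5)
Your proposal is correct and follows essentially the same route as the paper: the paper also constructs $f_\eps$ by viewing $\cL_x$ as a bounded perturbation of the advection operator with absorbing boundary condition (which generates a contraction semigroup on $L^2(\Om\times\bR^N;dxd\mu)$), derives (a) from $\bR\subset\Ker(\cL_x^*)$, and obtains (b) from Lemma \ref{L-PropLx} (c) together with the usual energy estimate for the transport equation. Your explicit attention to the approximation/trace argument needed to justify the energy identity for weak solutions is a point the paper passes over silently, but it is the standard completion of the same argument, not a different approach.
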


\begin{proof}
The operator $\phi(x,v)\mapsto\cL_x\phi(x,v)$ is a bounded perturbation of the advection operator $-v\cdot\grad_x$ with absorbing boundary condition (\ref{AbsBC}) that is the generator of a 
strongly continuous contraction semigroup on $L^2(\Om\times\bR^N;dxd\mu)$. This implies the existence and uniqueness of the weak solution $f_\eps$ of the initial-boundary value problem 
(\ref{ScalLinB}) in the functional space $C_b(\bR_+;L^2(\Om\times\bR^N;dxd\mu))$.  Statement (a) follows from the inclusion $\bR\subset\Ker(\cL^*_x)$ in Lemma \ref{L-PropLx} (d), while 
statement (b) follows from Lemma \ref{L-PropLx} (c) and the usual energy estimate for the transport equation with source.
\end{proof}

Statement (a) in Theorem \ref{T-WDiffApprox} follows from Proposition \ref{P-ExUnLinB}.

\subsection{Existence and uniqueness theory for the diffusion problem (\ref{LimPDE})}\lb{SS-LaxMilgram}
%%%%%%%%%%%%%%%%%%%%%%%%%%%%%%%%%%%%%%%%%%%%%%%%%%%%%%%%%%%%%%%%%%%%%%%%%%%%%%%%%%%%%%%%%%%%%%%%

Define
$$
\cH:=\left\{u\in L^2(\Om)\hbox{ s.t. }u(x)=\frac1{|B_l|}\int_{B_l}u(y)dy\hbox{ for a.e. }x\in B_l\,,\,\,l=1,\ldots,n\right\}\,,
$$
and
$$
\cV:=\cH\cap H^1_0(\Om)=\{u\in H^1_0(\Om)\hbox{ s.t. }\grad u(x)=0\hbox{ for a.e. }x\in B_l\,,\,\,l=1,\ldots,n\}\,.
$$

For each $\rho^{in}\in\cH$, consider the following variational problem
\be\lb{LimVP}
\left\{
\ba
{}&\rho\in C_b(\bR_+;\cH)\cap L^2(\bR_+;\cV)\,,\quad\d_t\rho\in L^2(\bR_+;\cV')\,,&&\hbox{ and }\rho\rstr_{t=0}=\rho^{in}\,,
\\
&\frac{d}{dt}\int_\Om\rho(t,x)w(x)dx+\int_A\grad w(x)\cdot M(x)\grad_x\rho(t,x)dx=0\,,&&\hbox{ for a.e. }t\ge 0\,,
\\
&\hbox{}&&\hbox{ for all }w\in\cV\,.
\ea
\right.
\ee
where $x\mapsto M(x)$ is an $M_N(\bR)$-valued measurable matrix field such that $M_{ij}\in L^\infty(A)$ for all $i,j=1,\ldots,N$.

\begin{lem}\lb{P-EqVP=PDE}
Assume that $x\mapsto M(x)$ is an $M_N(\bR)$-valued measurable matrix field on $A$ such that $M_{ij}\in L^\infty(A)$ for all $i,j=1,\ldots,N$. Let $\rho\in C([0,T];\cH)\cap L^2([0,T];\cV)$ with 
$\d_t\rho\in L^2([0,T];\cV')$. Then $\rho$ is a solution of the variational problem (\ref{LimVP}) if and only if
$$
\left\{
\ba
{}&\d_t\rho-\Div_x(M\grad_x\rho)=0&&\quad\hbox{ in }\cD'(\bR_+^*\times A)\,,
\\	
&\rho(t,\cdot)\rstr_{\d\Om}=0&&\quad\hbox{ in }L^2([0,T];H^{1/2}(\d\Om))\,,
\\
&\dot{\rho}_l=\frac1{|B_l|}\La \frac{\d\rho}{\d n_M},1\Ra_{H^{-1/2}(\d B_l),H^{1/2}(\d B_l)}&&\quad\hbox{ in }H^{-1}((0,T))\,,\,\,l=1,\ldots,m
\\ 
&\rho\rstr_{t=0}=\rho^{in}\,.
\ea
\right.
$$
\end{lem}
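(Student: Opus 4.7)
The plan is to prove the equivalence in both directions by choosing specific test functions in the variational formulation (\ref{LimVP}) to extract each piece of the strong form, and conversely by reassembling the variational identity from the PDE via integration by parts in $A$ together with the interface ODE.

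Forward direction (VP $\Rightarrow$ strong form). The boundary condition on $\d\Om$ is immediate from $\rho(t,\cdot)\in\cV\subset H^1_0(\Om)$, and the constancy of $\rho(t,\cdot)$ on each $B_l$ (equal to $\rho_l(t)$) comes directly from $\rho\in\cH$. To extract the interior PDE, I would test with $w\in C_c^\infty(A)$: such $w$ lies in $\cV$ because it vanishes near $\d\Om$ and near each $B_l$, so the VP identity reduces to
\[
\langle\d_t\rho,w\rangle_{\cV',\cV}+\int_A\grad w\cdot M(x)\grad_x\rho\,dx=0,
\]
yielding $\d_t\rho-\Div_x(M\grad_x\rho)=0$ in $\cD'(\bR_+^*\times A)$. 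For the interface ODE, for each fixed $l$ I would pick $w_l\in\cV$ equal to $1$ on $B_l$, equal to $0$ on $B_j$ for $j\neq l$, vanishing on $\d\Om$, and smooth on $\overline A$; such test functions exist because $\d B_l$ is piecewise $C^1$ and the $B_j$'s are pairwise disjoint and disjoint from $\d\Om$. Splitting the mass integral as $\int_\Om\rho w_l\,dx=|B_l|\rho_l(t)+\int_A\rho w_l\,dx$ and applying the generalised Green formula on $A$ to the interior PDE to rewrite $\int_A\d_t\rho\,w_l\,dx$ as a boundary integral on $\d B_l$ minus $\int_A\grad w_l\cdot M\grad\rho\,dx$, the two volume terms cancel and one is left with $|B_l|\dot\rho_l=\La \d\rho/\d n_M,1\Ra_{H^{-1/2}(\d B_l),H^{1/2}(\d B_l)}$, with the sign determined by the orientation of $n_x$.

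Reverse direction (strong form $\Rightarrow$ VP). Given an arbitrary $w\in\cV$, decompose it as $w=w\indc_A+\sum_{l=1}^m w_l\indc_{B_l}$ where $w_l\in\bR$ is the constant value of $w$ on $B_l$. Pair the interior PDE with $w\rstr_A\in H^1(A)$ and invoke the Lions--Magenes Green formula on $A$ to obtain
\[
\int_A\d_t\rho\,w\,dx+\int_A\grad w\cdot M\grad_x\rho\,dx=\sum_{l=1}^m w_l\La \d\rho/\d n_M,1\Ra_{H^{-1/2}(\d B_l),H^{1/2}(\d B_l)},
\]
using that $w\rstr_{\d\Om}=0$ and $w\rstr_{\d B_l}=w_l$. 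The interface ODE replaces the right-hand side by $\sum_l w_l|B_l|\dot\rho_l=\frac{d}{dt}\int_B\rho w\,dx$, which combines with $\int_A\d_t\rho\,w\,dx$ to yield $\frac{d}{dt}\int_\Om\rho w\,dx$, i.e.\ the variational identity in (\ref{LimVP}). The initial condition and the time-regularity are inherited directly in both directions.

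The main obstacle is justifying that $M\grad_x\rho$ admits a normal trace on each $\d B_l$ as an element of $H^{-1/2}(\d B_l)$, so that the pairing $\La \d\rho/\d n_M,1\Ra_{H^{-1/2},H^{1/2}}$ is meaningful and the generalised Green formula applies. This must be extracted from $\rho\in L^2(\bR_+;\cV)$ (which gives $M\grad_x\rho\in L^2(\bR_+\times A;\bR^N)$) together with the relation $\Div_x(M\grad_x\rho)=\d_t\rho$ supplied by the interior PDE: the latter is \emph{a priori} only an identity in $\cV'$, and some care is needed to localise it to $A$ so that $M\grad_x\rho$ lies in an $H(\Div;A)$-type space on which the Lions--Magenes trace theorem operates. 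Once this point is settled, the remainder of the argument is the bookkeeping of boundary terms described above and the proper accounting of the orientation of $n_x$ on $\d A$.
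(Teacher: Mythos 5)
Your overall architecture coincides with the paper's: testing with $w\in C^\infty_c(A)$ yields the interior equation, the Dirichlet condition follows from $\cV\subset H^1_0(\Om)$, and the transmission condition is obtained by exploiting the freedom in the constant values $w_l$ of $w\in\cV$ on the $B_l$ together with a Green formula on $A$ (the paper keeps a general $w\in\cV$ and varies $(w_1,\ldots,w_m)$ at the end, which amounts to the same thing as your explicit cut-off test functions). The converse direction is likewise the same bookkeeping run backwards.

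The one genuine gap is the point you yourself flag and leave unresolved: the meaning of the pairing $\la\d\rho/\d n_M,1\ra_{H^{-1/2}(\d B_l),H^{1/2}(\d B_l)}$ and the validity of the generalised Green formula. The route you sketch --- putting $M\grad_x\rho(t,\cdot)$ into an $H(\Div;A)$-type space for a.e.\ $t$ and invoking a spatial trace theorem --- does not go through as stated: the interior equation only gives $\Div_x(M\grad_x\rho)=\d_t\rho$, and $\d_t\rho$ lies merely in $L^2(0,T;\cV')$, so for a.e.\ fixed $t$ the spatial divergence need not be an $L^2(A)$ function and the time-sliced field has no reason to belong to $H(\Div;A)$. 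The paper's resolution (Lemma \ref{L-NormTr}) is to work in space-time: the field $X=(\rho,-M\grad_x\rho)$ on $(0,T)\times A$ satisfies $\Div_{t,x}X=0\in L^2$, hence admits a normal trace on the lateral boundary $(0,T)\times\d A$ in the dual of $H^{1/2}_{00}((0,T)\times\d A)$, and pairing with test functions of the product form $\ka(t)\psi(x)$ yields the Green formula as an identity in $H^{-1}(0,T)$ --- which is precisely why the transmission condition in the statement is asserted only in $H^{-1}((0,T))$ rather than pointwise in $t$. You need this space-time construction, or an equivalent substitute, before the boundary terms in your computation are meaningful; once it is in place, the rest of your argument matches the paper's.
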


\begin{proof}
Specializing (\ref{LimVP}) to the case where $w\in C^\infty_c(A)$ is equivalent to 
$$
\d_t\rho-\Div_x(M\grad_x\rho)=0\quad\hbox{ in }\cD'(\bR_+^*\times A)\,.
$$
In particular, the (a.e. defined) vector field
$$
(0,\tau)\times A\ni(t,x)\mapsto(\rho(t,x),-M(x)\grad_x\rho(t,x))
$$
is divergence free in $(0,\tau)\times A$. Applying statement (b) in Lemma \ref{L-NormTr} shows that
$$
\ba
0=\frac{d}{dt}\int_\Om\rho(t,x)w(x)dx+\int_A\grad w(x)\cdot M(x)\grad_x\rho(t,x)dx
\\
=\frac{d}{dt}\int_A\rho(t,x)w(x)dx+\sum_{l=1}^m|B_l|w_l\dot{\rho}_l(t)+\int_A\grad w(x)\cdot M(x)\grad_x\rho(t,x)dx
\\
=\sum_{l=1}^mw_l\left(|B_l|\dot{\rho}_l(t)-\La\frac{\d\rho}{\d n_M}(t,\cdot)\Rstr_{\d B_l},1\Ra_{H^{-1/2}(\d B_l),H^{1/2}(\d B_l)}\right)
\ea
$$
for each $w\in\cV$, where
$$
w_l:=\frac1{|B_l|}\int_{B_l}w(y)dy\,,\quad l=1,\ldots,m\,.
$$
Since this is true for all $w\in\cV$, and therefore for all $(w_1,\ldots,w_m)\in\bR^m$, one concludes that
$$
|B_l|\dot{\rho}_i-\La\frac{\d\rho}{\d n_M}\Rstr_{\d B_l},1\Ra_{H^{-1/2}(\d B_l),H^{1/2}(\d B_l)}=0
$$
in $H^{-1}((0,\tau))$ for all $l=1,\ldots,m$, which is precisely the transmission condition on $\d B_l$. Finally, the Dirichlet condition on $\d\Om$ comes from the condition $\rho\in L^2(\bR_+;\cV)$ 
since $\cV\subset H^1_0(\Om)$.

Conversely, if $\rho\in C_b(\bR_+,\cH)\cap L^2(\bR_+,\cV)$ s.t. $\d_t\rho\in L^2(\bR_+,\cV')$ satisfies the initial condition and the diffusion equation in (\ref{LimPDE}) in the sense of distributions 
on $\bR_+^*\times A$, together with the transmission condition on $\d B_l$ for each $l=1,\ldots,m$, it follows from the identity above that $\rho$ must satisfy (\ref{LimVP}).
\end{proof}

This lemma justifies the following definition.

\begin{defn}\lb{D-DefWSolLimPDE}
For $\rho^{in}\in\cH$, a weak solution of the problem (\ref{LimPDE}) is a function $\rho\equiv\rho(t,x)$ such that $\rho\in C_b(\bR_+;\cH)\cap L^2(\bR_+;\cV)$ and $\d_t\rho\in L^2(\bR_+;\cV')$
which satisfies the variational formulation and the initial condition in (\ref{LimVP}).
\end{defn}

\smallskip
The existence and uniqueness theory for the limiting diffusion problem is summarized in the following proposition.

\begin{prop}\lb{P-ExUnDiff}
Assume that $x\mapsto M(x)$ is an $M_N(\bR)$-valued measurable matrix field on $A$ satisfying 
$$
\ba
M_{ij}\in L^\infty(A)\hbox{ for all }i,j=1,\ldots,N\,,\hbox{ and }\hbox{there exists }\a>0\hbox{ s.t.}&
\\
\xi\cdot M(x)\xi\ge\a|\xi|^2\hbox{ for a.e. }x\in A\hbox{ and all }\xi\in\bR^N&\,.
\ea
$$
For each $\rho^{in}\in\cH$, the diffusion problem (\ref{LimPDE}) has a unique weak solution. This solution satisfies the ``energy'' identity for each $t\ge 0$:
$$
\tfrac12\int_\Om\rho(t,x)^2dx+\int_0^t\int_A\grad_x\rho(s,x)\cdot M(x)\grad_x\rho(s,x)dxds=\tfrac12\int_\Om\rho^{in}(x)^2dx\,.
$$
\end{prop}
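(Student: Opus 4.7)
The plan is to cast the problem in the Gelfand triple framework $\cV\hookrightarrow\cH\hookrightarrow\cV'$ and apply the standard J.-L. Lions existence theorem for abstract parabolic equations. First I would check that $\cH$ is a closed subspace of $L^2(\Om)$ (it is defined by the closed linear constraint that $u$ coincide with its mean on each $B_l$), hence a Hilbert space, and that $\cV$ is a closed subspace of $H^1_0(\Om)$, hence likewise a Hilbert space. The inclusion $\cV\subset\cH$ is continuous. Density of $\cV$ in $\cH$ is the one point that needs a small verification: given $u\in\cH$ with values $u_l$ on $B_l$ and $u\rstr_A\in L^2(A)$, one approximates $u\rstr_A$ in $L^2(A)$ by functions $\varphi_n\in C^\infty_c(A)$ that vanish in a neighbourhood of $\d A$ (hence in particular near $\d\Om$ and near each $\d B_l$), and then extends $\varphi_n$ by the constant $u_l$ on each $B_l$; the resulting function lies in $\cV$ and converges to $u$ in $L^2(\Om)$.

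Next, introduce the bilinear form
\[
a(u,w):=\int_A\grad u(x)\cdot M(x)\grad w(x)\,dx\,,\qquad u,w\in\cV\,.
\]
Continuity on $\cV\times\cV$ is immediate from $M_{ij}\in L^\infty(A)$. For coercivity, note that every $u\in\cV$ satisfies $\grad u=0$ on $B$, so $\|\grad u\|_{L^2(A)}=\|\grad u\|_{L^2(\Om)}$; combining the ellipticity $\xi\cdot M(x)\xi\ge\a|\xi|^2$ with Poincar\'e's inequality on $H^1_0(\Om)$ yields
\[
a(u,u)\ge\a\|\grad u\|_{L^2(\Om)}^2\ge\tfrac{\a}{1+C_P^2}\bigl(\|u\|_{L^2(\Om)}^2+\|\grad u\|_{L^2(\Om)}^2\bigr)\,,
\]
which is coercivity in the natural Hilbert norm of $\cV$. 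Together with the identification $\cH\simeq\cH'$ and the continuous dense embeddings $\cV\hookrightarrow\cH\hookrightarrow\cV'$, this puts the variational problem (\ref{LimVP}) in the exact setting of the Lions existence and uniqueness theorem (e.g.\ Theorem X.9 in \cite{Brezis}). Applying it produces a unique $\rho\in L^2(\bR_+;\cV)\cap C_b(\bR_+;\cH)$ with $\d_t\rho\in L^2(\bR_+;\cV')$ solving (\ref{LimVP}) with initial datum $\rho^{in}\in\cH$. Lemma~\ref{P-EqVP=PDE} then identifies this weak solution with a weak solution of (\ref{LimPDE}) in the sense of Definition~\ref{D-DefWSolLimPDE}.

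The energy identity is obtained by testing with $w=\rho(t,\cdot)$, which is legitimate because $\rho(t,\cdot)\in\cV$ for a.e.\ $t$. The standard chain rule lemma for the Gelfand triple (see e.g.\ Lemma 1.2 in Chapter III of Temam, or Proposition 23.23 in Zeidler) gives
\[
\langle\d_t\rho(t,\cdot),\rho(t,\cdot)\rangle_{\cV',\cV}=\tfrac12\tfrac{d}{dt}\|\rho(t,\cdot)\|_{L^2(\Om)}^2
\]
in the sense of distributions on $\bR_+$; integration in time from $0$ to $t$ together with $a(\rho,\rho)=\int_A\grad_x\rho\cdot M\grad_x\rho\,dx$ yields the claimed ``energy'' identity, and in particular shows that the map $t\mapsto\|\rho(t,\cdot)\|_{L^2(\Om)}^2$ is continuous and non-increasing, which upgrades $\rho$ to lie in $C_b(\bR_+;\cH)$.

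The only genuinely delicate step is the density of $\cV$ in $\cH$ (so that the ambient Gelfand triple structure is legitimate) and the equivalence of the $H^1_0(\Om)$-norm with $\|\grad\cdot\|_{L^2(A)}$ on $\cV$; both rest on the regularity assumption that each $B_l$ has piecewise $C^1$ boundary and is compactly contained in $\Om$, together with standard Poincar\'e and cutoff arguments. Once these geometric points are in place, the remainder of the proof is an essentially routine application of the abstract parabolic theory, so no further obstacle is expected.
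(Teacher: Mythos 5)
Your overall strategy coincides with the paper's: both prove existence and uniqueness by applying the Lions--Magenes theorem (Theorem X.9 in \cite{Brezis}) to the bilinear form $a(u,w)=\int_A\grad u(x)\cdot M(x)\grad w(x)\,dx$ on the triple $\cV\subset\cH\subset\cV'$, and both obtain the energy identity by testing the equation against $\rho(t,\cdot)$ and invoking the chain-rule lemma for such triples (the paper's Lemma~\ref{L-LemEnerg}, after first using Lemma~\ref{L-Lempp} to upgrade ``$\la L(t),w\ra_{\cV',\cV}=0$ for a.e. $t$, for each fixed $w$'' to ``$L(t)=0$ in $\cV'$ for a.e. $t$'' --- a measurability point you pass over silently, but it is standard). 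Your coercivity estimate via Poincar\'e is in fact slightly stronger than the paper's G\aa rding-type bound $a(u,u)\ge c(\|u\|_{\cV}^2-\|u\|_{\cH}^2)$; either form suffices for the abstract theorem.

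The one step that does not work as written is your verification of the density of $\cV$ in $\cH$. If $\varphi_n\in C^\infty_c(A)$ vanishes in a neighbourhood of $\d B_l$ and you then extend it by the constant $u_l$ on $B_l$, the resulting function jumps from $0$ to $u_l$ across $\d B_l$; for $u_l\neq 0$ it therefore does not belong to $H^1(\Om)$, hence not to $\cV$. The repair is standard: choose cutoffs $\chi_l\in C^\infty_c(\Om)$ with pairwise disjoint supports, $\chi_l\equiv 1$ on a neighbourhood of $B_l$ and $\Supp\chi_l\cap B_{l'}=\varnothing$ for $l'\neq l$; then $w:=\sum_l u_l\chi_l\in\cV$, the remainder $u-w$ vanishes on $B$, and its restriction to $A$ can be approximated in $L^2(A)$ by functions of $C^\infty_c(A)$ whose extensions by zero to $\Om$ lie in $\cV$. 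With this correction your argument is complete and is essentially the paper's proof.
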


\begin{proof}
The existence and uniqueness of the solution of the variational problem (\ref{LimVP}) is a straightforward consequence of the Lions-Magenes Theorem X.9 in \cite{Brezis}, with the bilinear form 
$$
a(u,w):=\int_A\grad u(x)\cdot M(x)\grad w(x)dx\,,\quad u,w\in\cV\,.
$$
Indeed, this bilinear form satisfies the assumptions of the Lions-Magenes theorem, since the first inequality in Theorem \ref{T-WDiffApprox} (c) (already established in section \ref{SS-FredLx})
implies that
$$
|a(u,w)|\le 2C_K\la|v|^2\ra\|\grad u\|_{L^2(A)}\|\grad w\|_{L^2(A)}\le 2C_K\la|v|^2\ra\|u\|_{\cV}\|w\|_{\cV}\,,
$$
while the second inequality there implies that
$$
a(u,u)\ge\frac{\b}{2C_K}\|\grad u\|^2_{L^2(A)}=\frac{\b}{2C_K}\|\grad u\|^2_{L^2(\Om)}=\frac{\b}{2C_K}(\|u\|^2_{\cV}-\|u\|^2_{\cH})
$$
for each $u,w\in\cV$.

Consider next the linear functional
$$
L(t):\,\cV\ni w\mapsto\la\d_t\rho,w\ra_{\cV',\cV}+a(\rho(t,\cdot),w)
$$
defined for a.e. $t\ge 0$. Since $L(t)=0$ for a.e. $t\in\bR$, one has
$$
\la L(t),\rho(t,\cdot)\ra_{\cV',\cV}=0\quad\hbox{ for a.e. }t\ge 0\,,
$$
for each $w\in\cV$. By Lemma \ref{L-Lempp}, one has
$$
L(t)=0\hbox{ in }\cV'\hbox{ for a.e. }t\in\bR_+\,.
$$
In particular, for a.e. $s\ge 0$, one has
$$
0=\la L(s),\rho(s,\cdot)\ra_{\cV',\cV}=\la\d_t\rho(s,\cdot),\rho(s,\cdot)\ra_{\cV',\cV}+\int_A\grad_x\rho(s,x)\cdot M(x)\grad_x\rho(s,x)dx\,,
$$
and one concludes by integrating in $s\in[0,t]$ and applying Lemma \ref{L-LemEnerg} b).
\end{proof}

%%%%%%%%%%%%%%%%%%%%%%%%%%%%%%%%%%%%%%%%%%%%%%%%%%%%%%%%%%%%%%%%%%%%%%%%%%%%%%%%%%%%%%%%%%%%%%%%

\section{Diffusion approximation: proof of Theorem \ref{T-WDiffApprox} (d)}\lb{S-ProofWDiff}

%%%%%%%%%%%%%%%%%%%%%%%%%%%%%%%%%%%%%%%%%%%%%%%%%%%%%%%%%%%%%%%%%%%%%%%%%%%%%%%%%%%%%%%%%%%%%%%%

The proof is split in several steps.

\smallskip
\noindent
\textit{Step 1: uniform bounds and weak compactness.}

By the entropy inequality (statement (b) in Proposition \ref{P-ExUnLinB}), one has the bounds
\be\lb{UnifBdfq}
\left\{
\ba
{}&\|f_\eps(t,\cdot,\cdot)\|_{L^2(\Om\times\bR^N;dxd\mu)}\le\|\rho^{in}\|_{L^2(\Om)}\quad\hbox{ and }
\\
&\|\sqrt{k_\eps}q_\eps\|_{L^2(\bR_+\times\Om\times\bR^N\times\bR^N;dtdxd(\mu\otimes\mu)}\le\|\rho^{in}\|_{L^2(\Om)}
\ea
\right.
\ee
By the Banach-Alaoglu theorem, the families $f_\eps$ and $\sqrt{k_\eps}q_\eps$ are relatively compact in $L^\infty(\bR_+;L^2(\Om\times\bR^N;dxd\mu))$ weak-* and 
$L^2(\bR_+\times\Om\times\bR^N\times\bR^N;dtdxd(\mu\otimes\mu))$ weak respectively. Extracting subsequences if needed, one has
\be\lb{Wlimf}
f_\eps\to f\hbox{ in }L^\infty(\bR_+;L^2(\Om\times\bR^N;dxd\mu))\hbox{ weak-*}
\ee
while
\be\lb{WLimq}
\sqrt{k_\eps}q_\eps\to r\hbox{ in }L^2(\bR_+\times\Om\times\bR^N\times\bR^N;dtdxd(\mu\otimes\mu))\hbox{ weak.}
\ee
In particular
\be\lb{WLim2q}
q_\eps\to q\hbox{ in }L^2(\bR_+\times A\times\bR^N\times\bR^N;k_A(x,v,w)dtdxd(\mu\otimes\mu))\hbox{ weak,}
\ee
where
\be\lb{Defq}
q(t,x,v,w):=r(t,x,v,w)/\sqrt{k_A(x,v,w)}\,,
\ee
for $dtdxd\mu(vd\mu(w)$-a.e. $(t,x,v,w)\in \bR_+\times A\times\bR^N\times\bR^N$.

\smallskip
\noindent
\textit{Step 2: asymptotic form of the linear Boltzmann equation}

One has
$$
\ba
\frac1\eps\cL_xf_\eps(t,x,v)=\int_{\bR^N}\indc_A(x)k_A(x,v,w)q_\eps(t,x,v,w)d\mu(w)
\\
+\int_{\bR^N}\indc_B(x)k_\eps(x,v,w)q_\eps(t,x,v,w)d\mu(w)
\ea
$$
Since $(x,v,w)\mapsto\indc_A(x)$ belongs to $L^2(A\times\bR^N\times\bR^N;k(x,v,w)dxd(\mu\otimes\mu))$ by (\ref{Hyp1/k})
$$
\int_{\bR^N}\indc_A(x)k_A(x,v,w)q_\eps(t,x,v,w)d\mu(w)
\to
\int_{\bR^N}\indc_A(x)k_A(x,v,w)q(t,x,v,w)d\mu(w)
$$
in the weak topology of $L^2(\bR_+\times A\times\bR^N;dtdxd\mu)$ as $\eps\to 0$. On the other hand, the Cauchy-Schwarz inequality and (\ref{Hyp1/k}) imply that
$$
\ba
\left\|\int_{\bR^N}k_\eps(\cdot,\cdot,w)q_\eps(\cdot,\cdot,\cdot,w)d\mu(w)\right\|^2_{L^2(\bR_+\times B\times\bR^N;dtdxd\mu)}
\\
\le
\|a_\eps\|_{L^\infty(B\times\bR^N)}\iint_{\bR_+\times\Om}\lA k_\eps(x,\cdot,\cdot)q_\eps(t,x,\cdot,\cdot)^2\rA dtdx
\\
\le\|a_\eps\|_{L^\infty(B\times\bR^N)}\|\rho^{in}\|^2_{L^2(\Om)}\to 0
\ea
$$
as $\eps\to 0$, by (\ref{ato0onB}) and the entropy inequality in Proposition \ref{P-ExUnLinB}. Thus
\be\lb{LimLf/e}
\frac1\eps\cL_xf_\eps(t,x,v)\to\int_{\bR^N}\indc_A(x)k_A(x,v,w)q(t,x,v,w)d\mu(w)
\ee
in the weak topology of $L^2(\bR_+\times\Om\times\bR^N;dxd\mu)$ as $\eps\to 0$. Passing to the limit in the scaled Boltzmann equation (\ref{ScalLinB}) we see that
\be\lb{LimRegf}
\ba
v\cdot\grad_xf\in L^2(\bR_+\times\Om\times\bR^N,dtdxd\mu)\quad\hbox{ and }
\\
\int_{\bR^N}k_A(\cdot,\cdot,w)q(\cdot,\cdot,\cdot,w)d\mu(w)\in L^2(\bR_+\times A\times\bR^N,dtdxd\mu)\,,
\ea
\ee
while
\be\lb{LimBEq}
v\cdot\grad_xf(t,x,v)+\indc_A(x)\int_{\bR^N}k_A(x,v,w)q(t,x,v,w)d\mu(w)=0\,,
\ee
for $dtdxd\mu$-a.e. $(t,x,v)\in\bR_+\times\Om\times\bR^N$.

\smallskip
\noindent
\textit{Step 3: asymptotic form of $f_\eps$.}

Multiplying both sides of the scaled linear Boltzmann equation (\ref{ScalLinB}) by $\eps$ and passing to the limit in the sense of distributions as $\eps\to 0$, one finds that
$$
\cL_xf(t,x,v)=0\quad\hbox{ for a.e. }(t,x,v)\in\bR_+^*\times\Om\times\bR^N\,.
$$
By Lemma \ref{L-PropLx} (d), this implies that $f(t,x,v)$ is independent of $v$ for a.e. $x\in A$, i.e. is of the form
\be\lb{f(t,x)}
f(t,x,v)=\rho(t,x)\quad\hbox{ for a.e. }(t,x,v)\in\bR_+^*\times A\times\bR^N\,.
\ee
By (\ref{Wlimf}) and (\ref{LimRegf})
\be\lb{RegRhoA}
\rho\in L^\infty(\bR_+;L^2(A))\quad\hbox{ and }\grad_x\rho\in L^2(\bR_+\times A)\,,
\ee
since 
$$
(v\cdot\grad_xf)v=(v\otimes v)\cdot\grad_x\rho\in L^2(\bR_+\times A;L^1(\bR^N,d\mu))
$$
so that
$$
S\cdot\grad_x\rho=\la v\otimes v\ra\cdot\grad_x\rho\in L^2(\bR_+\times A)\,;
$$
one concludes since $\Det(S)\not=0$ by assumption (\ref{Mom2mu}).

In particular 
$$
\rho\rstr_{\d B_i}\in L^2([0,T];H^{1/2}(\d B_i))
$$
for each $T>0$ and each $i=1,\ldots,n$. 

In particular, the first condition in (\ref{LimRegf}) implies that $s\mapsto f(t,x+sv,v)$ is continuous in $s$ for $dtdxd\mu$-a.e. $(t,x,v)\in\bR_+\times\Om\times\bR^N$. Therefore, we 
deduce from (\ref{LimBEq}) and (\ref{f(t,x)}) that, for each $l=1,\ldots,m$
\be\lb{vgradfinBl}
\left\{
\ba
{}&v\cdot\grad_xf(t,x,v)=0\,,\quad x\in B_l\,,\,\,v\in\bR^N\,,\,\,t>0\,,
\\
&f(t,x,v)=\rho(t,x)\,,\quad x\in\d B_l\,,\,\,v\in\bR^N\,,\,\,t>0\,.
\ea
\right.
\ee
At this point, things are different according to whether (H3) or (H4 )holds. 

Under assumption (H3), applying the Cauchy-Schwarz inequality and the entropy inequality in Proposition \ref{P-ExUnLinB}, one finds that, 
$$
\ba
\int_0^\infty\int_B\|f_\eps(t,x,\cdot)-\la f_\eps\ra(t,x)\|^2_{L^2(\bR^N,d\mu)}dxdt
\\
=\int_0^\infty\int_B\int_{\bR^N}\left(\int_{\bR^N}(f_\eps(t,x,v)-f_\eps(t,x,w))d\mu(w)\right)^2d\mu(v)dxdt
\\
\le\Supess_{(x,v)\in B\times\bR^N}\int_{\bR^N}\frac{d\mu(w)}{k_\eps(x,v,w)}
\\
\times\int_0^\infty\int_B\iint_{\bR^N\times\bR^N}k_\eps(x,v,w)(f_\eps(t,x,v)-f_\eps(t,x,w))^2d\mu(v)d\mu(w)dxdt
\\
\le\eps^2\Supess_{(x,v)\in B\times\bR^N}\int_{\bR^N}\frac{d\mu(w)}{k_\eps(x,v,w)}\|f^{in}\|_{L^2(\Om\times\bR^N)}\to 0
\ea
$$
as $\eps\to 0$. The condition (H3) is used precisely at this point, in order to bound the right hand side of the last inequality above. Therefore 
$$
f(t,x,v)=\la f\ra(t,x)=:\rho(t,x)\quad\hbox{ for a.e. }(t,x,v)\in\bR_+\times B_l\times\bR^N\,,\quad l=1,\ldots,m\,.
$$
This implies that $v\cdot\grad_x\rho(t,x)=0$ for a.e. $(t,x,v)\in\bR_+\times B_l\times\bR^N$ and thus $S\grad_x\rho(t,x)=0$, by the first equality in (\ref{vgradfinBl}). This implies in turn that
$\grad_x\rho(t,x)=0$ for a.e. $(t,x)\in\bR_+\times B_l$ since the matrix $S=\la v\otimes v\ra$ is invertible by (\ref{Mom2mu}). Since $B_l$ is connected, we conclude that, for $l=1,\ldots,m$,
\be\lb{f=rhol(t)}
f(t,x,v)=\rho_l(t):=\frac1{|B_l|}\iint_{B_l\times\bR^N}f(t,x,v)dxd\mu(v)\,,
\ee
for $dtdxd\mu$-a.e. $(t,x,v)\in\bR_+\times B_l\times\bR^N$.

Under assumption (H4), observe that the first equality in (\ref{vgradfinBl}) implies that
$$
\frac{d}{ds}f(t,x+sv,v)=0\quad\hbox{ for all }s\hbox{ s.t. }x+sv\in B_l
$$
for $dtdxd\mu(v)$-a.e. $(t,x,v)\in\bR_+\times B_l\times\bR^N$. Thus, in view of the condition on $\mu$ in (H4), one concludes that
$$
\rho(t,x+\tau_l(x,v)v)=\rho(t,x)\hbox{ for }d\si(x)d\mu(v)-\hbox{ a.e. }(x,v)\in\d B_l\times\bR^N
$$
by solving the boundary value problem above by the method of characteristics. By assumption (\ref{Ergo})
$$
\rho(t,x)=\frac1{|\d B_l|}\int_{\d B_l}\rho(t,y)d\si(y)=:\rho_l(t)\quad\hbox{ for a.e. }x\in\d B_l\,,
$$
for a.e. $t\ge 0$. In other words, $\rho(t,\cdot)$ is a.e. equal to a constant on $\d B_l$. Solving again for $f$ along characteristics, we conclude that (\ref{f=rhol(t)}) holds under assumption (H4)
even if (H3) is not verified. 

Summarizing, we have proved that
\be\lb{RegRho}
\ba
{}&f(t,x,v)=\rho(t,x)\hbox{ for }dtdxd\mu-\hbox{a.e. }(t,x,v)\in\Om
\\
&\hbox{with }\rho\in L^\infty(\bR_+;\cH)\quad\hbox{ and }\grad_x\rho\in L^2(\bR_+\times\Om)\,.
\ea
\ee

\smallskip
\noindent
\textit{Step 4: Fourier's law and continuity equation}

Observe that the flux satisfies
\be\lb{FlaFlux}
\ba
\frac1\eps\la vf_\eps(t,x,\cdot)\ra&=\frac1\eps\la(\cL^*_xb(x,\cdot))f_\eps(t,x,\cdot)\ra=\La b(x,\cdot)\frac1\eps\cL_xf_\eps(t,x,\cdot)\Ra
\\
&=\iint_{\bR^N\times\bR^N}b(x,v)k(x,v,w)q_\eps(t,x,v,w)d\mu(v)d\mu(w)
\ea
\ee
for a.e. $(t,x)\in\bR_+\times A$ and for all $\eps>0$. Since $b\in L^\infty(A;L^2(\bR^N;d\mu))$ by Proposition \ref{P-FredLx} (c), the function $(x,v,w)\mapsto\sqrt{k_A(x,v,w)}b(x,v)$ belongs 
to the space $L^\infty(A;L^2(\bR^N\times\bR^N;d\mu(v)\mu(w)))$. Thus
\be\lb{LimFlux}
\ba
\frac1\eps\la vf_\eps(t,x,\cdot)\ra=\iint_{\bR^N\times\bR^N}b(x,v)k(x,v,w)q_\eps(t,x,v,w)d\mu(v)d\mu(w)
\\
\to\iint_{\bR^N\times\bR^N}b(x,v)k(x,v,w)q(t,x,v,w)d\mu(v)d\mu(w)
\\
=\la b(x,\cdot)v\cdot\grad_x\rho(t,x)\ra=M(x)\grad_x\rho(t,x)
\ea
\ee
in for the weak topology of $L^2(\bR_+\times A)$ as $\eps\to 0$, on account of (\ref{LimBEq}).

Therefore, for each $w\in\cV$, one has
$$
\frac{d}{dt}\int_\Om\la f_\eps(t,x,\cdot)\ra w(x)dx+\int_A\frac1\eps\la vf_\eps(t,x,\cdot)\ra\cdot\grad w(x)dx=0\,,
$$
(since $\grad w=0$ on $\mathring{B}$) and passing to the limit in each side of this identity as $\eps\to 0$ shows that
\be\lb{WFormDiffEq}
\frac{d}{dt}\int_\Om\rho(t,x)w(x)dx+\int_A\grad w(x)\cdot M(x)\grad_x\rho(t,x)dx=0
\ee
in the sense of distributions on $\bR_+^*$.

\smallskip
\noindent
\textit{Step 5: limiting initial condition}

By (\ref{FlaFlux}) and the Cauchy-Schwarz inequality
$$
\ba
\left\|\frac1\eps\la vf_\eps\ra\right\|^2_{L^2([0,T]\times A)}
&\le
\int_{\bR_+}\int_A\lA k_A(x,\cdot,\cdot)q_\eps(t,x,\cdot,\cdot)^2\rA dxdt
\\
&\times\Supess_{x\in A}\iint_{\bR^N\times\bR^N}k_A(x,v,w)|b(x,v)|^2d\mu(v)d\mu(w)
\\
&\le 8C_K^3\la|v|^2\ra\|\rho^{in}\|^2_{L^2(\Om)}
\ea
$$
using the entropy inequality in Proposition \ref{P-ExUnLinB} and Proposition \ref{P-FredLx} (c). Since
$$
\frac{d}{dt}\int_\Om\la f_\eps(t,x,\cdot)\ra w(x)dx=\int_A\frac1\eps\la vf_\eps(t,x,\cdot)\ra\cdot\grad w(x)dx
$$
for each $w\in\cV$, one has
\be\lb{BndDf/Dt}
\left\|\frac{d}{dt}\int_\Om\la f_\eps(\cdot,x,\cdot)\ra w(x)dx\right\|\le (2C_K)^{3/2}\la|v|^2\ra^{1/2}\|\rho^{in}\|_{L^2(\Om)}\|\grad w\|_{L^2(\Om)}\,.
\ee
Applying the Ascoli-Arzela theorem shows that, for each $w\in\cV$
\be\lb{UnifWCv<f>}
\int_\Om(\la f_\eps(t,x,\cdot)\ra-\rho(t,x))w(x)dx\to 0\hbox{ uniformly in }t\in[0,T]
\ee
for all $T>0$. In particular
$$
\iint_\Om f^{in}(x,v)w(x)dxd\mu(v)=\int_\Om\la f_\eps(0,x,\cdot)\ra w(x)dx\to\int_\Om\rho(0,x))w(x)dx
$$
as $\eps\to 0$. Since the test function $w$ is constant on $B_l$ for each $l=1,\ldots,m$, 
\be\lb{VarCondInDiff}
\ba
\int_\Om\rho(0,x)w(x)dx&=\iint_\Om f^{in}(x,v)w(x)dxd\mu(v)
\\
&=\int_\Om\rho^{in}(x)w(x)dx\quad\hbox{ for each }w\in\cV\,,
\ea
\ee
with $\rho^{in}$ defined by the formula in Theorem \ref{T-WDiffApprox}

Returning to (\ref{BndDf/Dt}), we have proved that $\d_t\la f_\eps\ra$ is bounded in $L^2(\bR_+,\cV')$ for each $T>0$, so that
\be\lb{RegDRho/Dt}
\d_t\rho\in L^2(\bR_+;\cV')\,.
\ee
Since $\rho\in L^\infty(\bR_+;\cH)\cap L^2([0,T];\cV)$ for each $T>0$ by (\ref{RegRho}), we conclude from (\ref{RegDRho/Dt}) that
$$
\rho\in C_b(\bR_+;\cH)
$$
so that (\ref{VarCondInDiff}) implies that $\rho$ satisfies the initial condition in (\ref{LimPDE}).

\smallskip
\noindent
\textit{Step 6: Dirichlet condition}

Next we establish the Dirichlet condition on $\d\Om$ for the diffusion equation. The scaled linear Boltzmann equation implies that, for each $\chi\in C^1_c(\bR_+^*)$,
$$
v\cdot\grad_x\int_0^\infty\chi(t)f_\eps(t,x,v)dt=-\int_0^\infty\chi(t)\frac1\eps\cL_xf_\eps(t,x,v)dt+\eps\int_0^\infty\chi'(t)f_\eps(t,x,v)dt
$$
is bounded in $L^2(\Om\times\bR^N;dxd\mu)$ by (\ref{LimLf/e}), the uniform boundedness principle (the Banach-Steinhaus theorem) and the entropy inequality in Proposition \ref{P-ExUnLinB}, 
while
$$
\int_0^\infty\chi(t)f_\eps(t,x,v)dt
$$
is bounded in $L^2(\Om\times\bR^N;dxd\mu)$ by the same entropy inequality. Hence 
$$
0=\int_0^\infty\chi(t)f_\eps(t,\cdot,\cdot)dt\Rstr_{\Ga^-}\to\int_0^\infty\chi(t)\rho(t,\cdot)dt\Rstr_{\Ga^-}
$$
in $L^2(\Ga^-;|v\cdot n_x|\tau(x,v)\wedge 1d\si(x)dv)$ by Cessenat's trace theorem \cite{Cess1}, where the notation $\tau(x,v)$ designates the forward exit time from $\Om$ starting from $x$ 
with velocity $v$, i.e.
$$
\tau(x,v):=\inf\{t>0\hbox{ s.t. }x+tv\in\d\Om\}\,,\quad x\in\Om\,,\,\,v\in\bR^N\,.
$$
In particular
$$
\int_0^\infty\chi(t)\rho(t,\cdot)dt\Rstr_{\d\Om}=0\,.
$$
By (\ref{RegRho}), we already know that the limiting density $\rho\in L^2([0,T];H^1(\Om))$. Therefore
\be\lb{DirCond}
\rho(t,\cdot)\rstr_{\d\Om}=0\quad\hbox{ in }L^2([0,T];H^{1/2}(\d\Om))
\ee
for each $T>0$.

\smallskip
\noindent
\textit{Step 7: convergence to the diffusion equation}

Summarizing, we have proved that
$$
f_\eps\hbox{ is relatively compact in }L^\infty(\bR_+;L^2(\Om\times\bR^N,dxd\mu(v)))\hbox{ weak-*}
$$
and that, if $f$ is a limit point of $f_\eps$ as $\eps\to 0$, it is of the form 
$$
f(t,x,v)=\rho(t,x)\quad dtdxd\mu(v)-\hbox{a.e. in }(t,x,v)\in\bR_+\times\Om\times\bR^N
$$
where
\be\lb{RhoSpace}
\ba
{}&\rho\in L^\infty(\bR_+;\cH)\cap L^2(\bR_+;H^1_0(\Om))=L^\infty(\bR_+;\cH)\cap L^2(\bR_+;\cV)
\\
&\hbox{and }\d_t\rho\in L^2(\bR_+;\cV')\
\ea
\ee
since $\rho$ satisfies the Dirichlet boundary condition (\ref{DirCond}) and $\grad_x\rho\in L^2(\bR_+\times\Om)$ by (\ref{RegRho}), together with 
(\ref{RegDRho/Dt}). In particular, this implies that
\be\lb{ContRho}
\rho\in C_b(\bR_+;\cH)\,.
\ee
Besides $\rho$ satisfies (\ref{WFormDiffEq}) for each test function $w\in\cV$, together with the initial condition (\ref{VarCondInDiff}). Therefore $\rho$ is the unique solution of the Dirichlet 
problem for the diffusion equation with diffusion matrix $M(x)$ defined in Theorem \ref{T-WDiffApprox} (c) with infinite diffusivity in $B$, with initial data $\rho^{in}$. By compactness and 
uniqueness of the limit point, we conclude that
$$
f_\eps\to\rho\quad\hbox{ in }L^\infty(\bR_+;L^2(\Om\times\bR^N,dxd\mu ))\hbox{ weak-*}
$$
as $\eps\to 0$.

\subsection{Proof of Theorem \ref{T-SDiffApprox}}
%%%%%%%%%%%%%%%%%%%%%%%%%%%%%%%%%%%%%%%%%%%%%%%%%%%%%%%%%%%%%%%%%%%%%%%%%%%%%%%%%%%%%%%%%%%%%%%%

Since $\cL_xb^*(x,v)=v$ for $dxd\mu$-a.e. $(x,v)\in A\times\bR^N$, we conclude from the uniqueness of the vector field $b$ in statement (c) of Proposition \ref{P-FredLx} that 
$b(x,v)=b^*(x,v)$ for $dxd\mu$-a.e. $(x,v)\in A\times\bR^N$. The identity in Proposition \ref{P-FredLx} (d) shows that
$$
M_{ij}(x)=\int_{\bR^N}b_i(x,v)v_jd\mu(v)=\int_{\bR^N}v_ib_j(x,v)d\mu(v)=M_{ji}(x)
$$
for a.e. $x\in A$ and all $i,j=1,\ldots,N$. This proves statement (a) in Theorem \ref{T-SDiffApprox}. 

It remains to prove the strong convergence in statement (b). The proof is based on the weak convergence already established in Theorem \ref{T-WDiffApprox} and on a squeezing argument
based on the entropy inequality for (\ref{ScalLinB}) and on the energy identity in (\ref{LimPDE}).

\textit{Step 1: limiting entropy production}

By definition of $q_\eps$, one has 
$$
q_\eps(t,x,v,w)=-q_\eps(t,x,w,v)
$$ 
for $dtdxd\mu(v)d\mu(w)$-a.e. $(t,x,v,w)\in\bR_+\times A\times\bR^N\times\bR^N$ and each $\eps>0$; by passing to the limit as $\eps\to 0$
$$
q(t,x,v,w)=-q(t,x,w,v)
$$
for $dtdxd\mu(v)d\mu(w)$-a.e. $(t,x,v,w)\in\bR_+\times A\times\bR^N\times\bR^N$. Defining
$$
k_A^s(t,x,v,w)=\tfrac12(k_A(t,x,v,w)+k_A(t,x,w,v))
$$
one has
$$
\lA k_A(x,\cdot,\cdot)q(t,x,\cdot,\cdot)^2\rA=\lA k_A^s(x,\cdot,\cdot)q(t,x,\cdot,\cdot)^2\rA
$$
for a.e. $(t,x)\in\bR_+\times A$. Likewise
$$
\ba
\iint_{\bR^N\times\bR^N}k_A(x,v,w)(\phi(v)-\phi(w))^2d\mu(v)d\mu(w)
\\
=\iint_{\bR^N\times\bR^N}k_A^s(x,v,w)(\phi(v)-\phi(w))^2d\mu(v)d\mu(w)
\ea
$$
and
$$
\ba
\iint_{\bR^N\times\bR^N}k_A(x,v,w)(\phi(v)-\phi(w))q(t,x,v,w)d\mu(v)d\mu(w)
\\
=\iint_{\bR^N\times\bR^N}k_A^s(x,v,w)(\phi(v)-\phi(w))q(t,x,v,w)d\mu(v)d\mu(w)
\ea
$$
for a.e. $(t,x)\in\bR_+\times\Om$. With $\phi(v)=\xi\cdot b(x,v)$ for some $\xi\in\bR^N$ to be chosen later, and applying the Cauchy-Schwarz inequality, one finds that
\be\lb{CSIneqEntrProd}
\ba
\left(\iint_{\bR^N\times\bR^N}k_A^s(x,v,w)\xi\cdot(b(x,v)-b(x,w))q(t,x,v,w)d\mu(v)d\mu(w)\right)^2
\\
\le
\iint_{\bR^N\times\bR^N}k_A(x,v,w)(\xi\cdot(b(x,v)-b(x,w)))^2d\mu(v)d\mu(w)\lA k_A(x,\cdot,\cdot)q(t,x,\cdot,\cdot)^2\rA\,.
\ea
\ee
On the other hand, by definition of $k_A^s$
$$
\ba
\iint_{\bR^N\times\bR^N}k_A^s(x,v,w)\xi\cdot(b(x,v)-b(x,w))q_\eps(t,x,v,w)d\mu(v)d\mu(w)
\\
=
\frac2\eps\iint_{\bR^N\times\bR^N}k_A^s(x,v,w)\xi\cdot(b(x,v)-b(x,w))f_\eps(t,x,v)d\mu(v)d\mu(w)
\\
=
\frac1\eps\la f_\eps(t,x,\cdot)(\cL_x+\cL^*_x)\xi\cdot b(x,\cdot)\ra 
=
\frac2\eps\la\xi\cdot vf_\eps(t,x,\cdot)\ra
\ea
$$
for a.e. $(t,x)\in\bR_+\times A$ where the last equality follows from the definitions of the vector fields $b$ and $b^*$ in Proposition \ref{P-FredLx} (c). Passing to the limit as $\eps\to 0$, 
one finds that
\be\lb{<kbq>}
\ba
\iint_{\bR^N\times\bR^N}k_A^s(x,v,w)\xi\cdot(b(x,v)-b(x,w))q(t,x,v,w)d\mu(v)d\mu(w)
\\
=-2\xi\cdot M(x)\grad_x\rho(t,x)
\ea
\ee
for a.e. $(t,x)\in\bR_+\times A$. On the other hand
\be\lb{<kqq>}
\ba
\iint_{\bR^N\times\bR^N}k_A(x,v,w)&(\xi\cdot(b(x,v)-b(x,w)))^2d\mu(v)d\mu(w)
\\
&=2\la\xi\cdot b(x,\cdot)\cL_x(\xi\cdot b(x,\cdot))\ra=2\la\xi\cdot b(x,\cdot)\xi\cdot v\ra=2\xi\cdot M(x)\xi
\ea
\ee
for a.e. $x\in A$, by Lemma \ref{L-PropLx} (c). Applying the Cauchy-Schwarz inequality in (\ref{<kbq>}) and using (\ref{<kqq>}) implies that
$$
2(\xi\cdot M(x)\grad_x\rho(t,x))^2\le\xi\cdot M(x)\xi\lA k_A(x,\cdot,\cdot)q(t,x,\cdot,\cdot)^2\rA\,.
$$
With the choice $\xi=\grad_x\rho(t,x)$, we conclude that
\be\lb{EntrProd>}
2\grad_x\rho(t,x)\cdot M(x)\grad_x\rho(t,x)\le\lA k_A(x,\cdot,\cdot)q(t,x,\cdot,\cdot)^2\rA
\ee
for a.e. $(t,x)\in\bR_+\times A$. 

\smallskip
\noindent
\textit{Step 2: strong convergence}

By Proposition \ref{P-ExUnLinB} (b), for each $t\ge 0$ and each $\eps>0$, one has
$$
\int_\Om\la f_\eps(t,x,\cdot)^2\ra dx+\int_0^t\int_\Om\lA k_\eps(x,\cdot,\cdot)q_\eps(s,x,\cdot,\cdot)^2\rA dxds\le\int_\Om\rho^{in}(x)^2dx\,.
$$
By Jensen's inequality
$$
\int_\Om\la f_\eps(t,x,\cdot)^2\ra dx\ge\int_\Om\la f_\eps(t,x,\cdot)\ra^2dx
$$
while, by convexity and weak convergence, 
$$
\varliminf_{\eps\to 0}\int_\Om\la f_\eps(t,x,\cdot)\ra^2dx\ge\int_\Om\rho(t,x)^2dx
$$
uniformly in $t\in[0,T]$ for each $T>0$ by (\ref{UnifWCv<f>}). By the same token, for each $T\in[0,\infty]$,
\be\lb{LimEntrProd>}
\varliminf_{\eps\to 0}\int_0^T\int_A\lA k_A(x,\cdot,\cdot)q_\eps(t,x,\cdot,\cdot)^2\rA dxdt\ge\int_0^T\int_A\lA k_A(x,\cdot,\cdot)q(t,x,\cdot,\cdot)^2\rA dxdt\,.
\ee

Since the weak solution $\rho$ of the diffusion problem (\ref{LimPDE}) satisfies 
$$
\int_\Om\rho(t,x)^2dx+2\int_0^t\int_A\grad_x\rho(s,x)\cdot M(x)\grad_x\rho(s,x)dxds=\int_\Om\rho^{in}(x)^2dx
$$
for each $t>0$ by Proposition \ref{P-ExUnDiff}, we conclude that
$$
\int_\Om\la f_\eps(t,x,\cdot)^2\ra dx\to\int_\Om\rho(t,x)^2dx\hbox{ for each }t\ge 0
$$
while
$$
\ba
\int_0^\infty\int_A\lA k_A(x,\cdot,\cdot)q_\eps(t,x,\cdot,\cdot)^2\rA dxdt&\to\int_0^\infty\int_A\lA k_A(x,\cdot,\cdot)q_\eps(t,x,\cdot,\cdot)^2\rA dxdt
\\
&=\int_0^\infty\int_A\grad_x\rho(t,x)\cdot M(x)\grad_x\rho(t,x)dxdt
\ea
$$
as $\eps\to 0$ by (\ref{EntrProd>}).

Therefore
$$
f_\eps(t,\cdot,\cdot)\to\rho\hbox{ strongly in }L^2(\Om\times\bR^N,dxd\mu)\hbox{ for all }t\ge 0
$$
and
$$
q_\eps\to q\hbox{ strongly in }L^2(\bR_+\times A\times\bR^N\times\bR^N,dtdxd(\mu\otimes\mu))
$$
as $\eps\to 0$ and (\ref{EntrProd>}) is an equality. In other words, for a.e. $(t,x)\in\bR_+\times A$, one has
$$
2\grad_x\rho(t,x)\cdot M(x)\grad_x\rho(t,x)=\lA k_A(x,\cdot,\cdot)q(t,x,\cdot,\cdot)^2\rA\,.
$$
Therefore $q$ satisfies the equality in the Cauchy-Schwarz inequality (\ref{CSIneqEntrProd}) (with $\xi=\grad_x\rho(t,x)$). This imples that $q$ is of the form
$$
q(t,x,v,w)=\l(t,x)(b^*(x,v)-b^*(x,w))\cdot\grad_x\rho(t,x)\,,
$$
for some measurable function $\l$ defined a.e. on $\bR_+\times A$. Inserting this expression for $q$ in (\ref{LimBEq}) we find that
$$
\ba
v\cdot\grad_x\rho(t,x)&=-\int_{\bR^N}q(t,x,v,w)d\mu(w)
\\
&=-\l(t,x)\grad_x\rho(t,x)\cdot\cL_xb^*(x,v)=-\l(t,x)v\cdot\grad_x\rho(t,x)\,,
\ea
$$
for a.e. $(t,x,v)\in\bR_+\times A\times\bR^N$. Therefore $\l(t,x)=-1$ for a.e. $(t,x)\in\bR_+\times A$ such that $\grad_x\rho(t,x)\not=0$, so that
$$
q(t,x,v,w)=-(b^*(x,v)-b^*(x,w))\cdot\grad_x\rho(t,x)\,,
$$
for a.e. $(t,x,v,w)\in\bR_+\times A\times\bR^N\times\bR^N$. Averaging in $w$, one finds that
$$
\ba
\frac1\eps\left(f_\eps(t,x,v)-\int_{\bR^N}f_\eps(t,x,w)d\mu(w)\right)&=\int_{\bR^N}q_\eps(t,x,v,w)d\mu(w)
\\
&\to\int_{\bR^N}q_\eps(t,x,v,w)d\mu(w)=-b^*(x,v)\cdot\grad_x\rho(t,x)
\ea
$$
in the strong topology of $L^2(\bR_+\times A\times\bR^N,dtdxd\mu)$ as $\eps\to 0$.

%%%%%%%%%%%%%%%%%%%%%%%%%%%%%%%%%%%%%%%%%%%%%%%%%%%%%%%%%%%%%%%%%%%%%%%%%%%%%%%%%%%%%%%%%%%%%%%%
\section{Conclusions}
%%%%%%%%%%%%%%%%%%%%%%%%%%%%%%%%%%%%%%%%%%%%%%%%%%%%%%%%%%%%%%%%%%%%%%%%%%%%%%%%%%%%%%%%%%%%%%%%

The main result presented above (Theorems \ref{T-WDiffApprox} and \ref{T-SDiffApprox}) can be generalized in several directions. 

First, our method obviously applies to a scaled linear Boltzmann equation of the form
$$
(\eps\d_t+v\cdot\grad_x)f_\eps(t,x,v)+\frac1\eps\cL_xf_\eps(t,x,v)+\eps\cB f_\eps(t,x,v)=\eps S(t,x,v)
$$
where $\cB$ is a bounded operator on $L^2(\Om\times\bR^N;dxd\mu)$, while the source term $S$ is chosen so that $S\in L^1(\bR_+;L^2(\Om\times\bR^N,dxd\mu))$. For instance $\cB$ could be 
the multiplication by an amplifying or damping coefficient, i.e. $\cB f_\eps(t,x,v)=\g(x)f_\eps(t,x,v)$ as in \cite{BSS}. In other words, problems where the collision process is nearly, but not exactly 
conservative can be treated exactly as above.

For some applications, for instance in the context of neutron transport theory, it would be important to extend the validity of the results presented in this paper to the case of scattering kernels which
fail to satisfy the semi-detailed balance condition (\ref{SemDet}).

More general boundary conditions than the absorbing condition on $\d\Om$ can also be considered. For instance, imposing a specular or diffuse reflection condition at the boundary, or a convex 
combination thereof, i.e. assuming that
$$
f_\eps(t,x,v)=(1-\th(x))f_\eps(t,x,v-2v\cdot n_xn_x)+\frac{\th(x)}{\la(w\cdot n_x)_+\ra}\int_{\bR^N}f_\eps(t,x,w)(w\cdot n_x)_+d\mu(w)
$$
with $\th\in C(\d\Om)$ satisfying $0\le\th(x)\le 1$ for all $x\in\d\Om$ and with a measure $\mu$ invariant under all transformations of the form $v\mapsto Qv$ for $Q\in O_N(\bR)$ leads to the same 
result as in Theorems \ref{T-WDiffApprox}-\ref{T-SDiffApprox}, except that the homogeneous Dirichlet condition on $\d\Om$ should be replaced with the homogeneous Neuman condition. 

Finally, since the compactness method used in the proof of Theorems \ref{T-WDiffApprox}-\ref{T-SDiffApprox}, finds its origin in \cite{BGPS}, we expect that the methods presented in this paper should 
also apply to some nonlinear problems, such as the radiative transfer equations.

%%%%%%%%%%%%%%%%%%%%%%%%%%%%%%%%%%%%%%%%%%%%%%%%%%%%%%%%%%%%%%%%%%%%%%%%%%%%%%%%%%%%%%%%%%%%%%%%

\begin{appendix}

\section{Auxiliary Lemmas on Evolution Equations}

%%%%%%%%%%%%%%%%%%%%%%%%%%%%%%%%%%%%%%%%%%%%%%%%%%%%%%%%%%%%%%%%%%%%%%%%%%%%%%%%%%%%%%%%%%%%%%%%

Let $\cV$ and $\cH$ be two separable Hilbert spaces such that $\cV\subset\cH$ with continuous inclusion and $\cV$ is dense in $\cH$. The Hilbert space $\cH$ is identified with its dual and the 
map 
$$
\cH\ni u\mapsto L_u\in\cV'\,,
$$
where $L_u$ is the linear functional 
$$
L_u:\cV\ni v\mapsto(u|v)_{\cH}\in\bR\,,
$$
identifies $\cH$ with a dense subspace of $\cV'$.

\begin{lem}\lb{L-LemEnerg}
Assume that 
$$
v\in L^2(0,T;\cV)\quad\hbox{ and }\frac{dL_v}{dt}\in L^2(0,T;\cV')\,.
$$
Then

\noindent
(a) the function $v$ is a.e. equal to a unique element of $C([0,T],\cH)$ still denoted $v$;

\noindent
(b) this function $v\in C([0,T],\cH)$ satisfies
$$
\tfrac12|v(t_2)|^2_{\cH}-\tfrac12|v(t_1)|^2_{\cH}=\int_{t_1}^{t_2}\La\frac{dL_v}{dt}(t),v(t)\Ra_{\cV',\cV}dt
$$
for all $t_1,t_2\in[0,T]$
\end{lem}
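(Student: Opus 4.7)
The plan is to follow the classical Lions--Magenes argument by time regularization. First, I would extend $v$ to a slightly larger interval $(-\delta, T+\delta)$ in such a way that the extension still lies in $L^2$ into $\cV$ and its distributional derivative in $L^2$ into $\cV'$; a symmetric reflection works (for example, $v(-t) := v(t)$ and $v(2T-t) := v(t)$ near the endpoints). Then convolve in time with a nonnegative $C^\infty$ mollifier $\rho_\eta$ supported in $(-\delta,\delta)$ to obtain a family $v_\eta \in C^\infty([0,T]; \cV)$ satisfying $L_{v_\eta}' = (L_v')_\eta \in C^\infty([0,T]; \cV')$, together with the convergences $v_\eta \to v$ in $L^2(0,T;\cV)$ and $L_{v_\eta}' \to L_v'$ in $L^2(0,T;\cV')$.

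For any smooth $u : [0,T] \to \cV$, the function $t \mapsto |u(t)|_\cH^2$ is $C^1$ with derivative $2(u'(t), u(t))_\cH = 2\langle L_u'(t), u(t) \rangle_{\cV',\cV}$, the second equality following from the identification of $\cH$ with its dual and the density of $\cV$ in $\cH$. Applied to $v_\eta$, this yields the energy identity
\[
\tfrac12 |v_\eta(t_2)|_\cH^2 - \tfrac12 |v_\eta(t_1)|_\cH^2 = \int_{t_1}^{t_2} \langle L_{v_\eta}'(t), v_\eta(t)\rangle_{\cV',\cV}\, dt
\]
for all $t_1,t_2 \in [0,T]$. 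Applying the same identity to the difference $v_\eta - v_{\eta'}$ and choosing an initial time $t_0 \in [0,T]$ at which $v_\eta(t_0) \to v(t_0)$ in $\cH$ (which exists by a.e.\ convergence in $\cH$, itself a consequence of $L^2(0,T;\cV) \hookrightarrow L^2(0,T;\cH)$ convergence), one deduces from the uniform convergences above that $\{v_\eta\}$ is Cauchy in $C([0,T]; \cH)$. Its limit $\tilde v \in C([0,T]; \cH)$ coincides with $v$ almost everywhere, proving part (a). Passing to the limit $\eta \to 0$ in the displayed identity --- the left side by continuity of $\tilde v$ into $\cH$, the right side by the product of strong--weak convergence in $L^2(0,T;\cV') \times L^2(0,T;\cV)$ --- yields part (b).

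The main obstacle is the careful construction of the extension so that the distributional derivative of the extended function agrees with the extension of $L_v'$ in the overlap region; reflection across $t = 0$ and $t = T$, followed by multiplication by a smooth cutoff, handles this cleanly and is the standard ingredient in arguments of this type. Once the regularized energy identity is established, both the continuity claim in (a) and the passage to the limit in (b) are soft consequences of the Cauchy criterion in $C([0,T]; \cH)$ and of the continuity of the duality pairing as a bilinear form on $L^2(0,T;\cV') \times L^2(0,T;\cV)$.
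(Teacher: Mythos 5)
Your argument is correct, and it is essentially the standard Lions--Magenes regularization proof. Note that the paper does not prove this lemma at all: it simply cites Proposition 2.1 and Theorem 3.1 of Chapter 1 in Lions--Magenes for part (a) and Theorem II.5.12 of Boyer--Fabrie for part (b). What you have written is a correct reconstruction of the argument behind those references, so there is nothing to compare beyond noting that you supply the details the paper delegates.

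The one step that deserves a word more of justification than you give it is the extension by even reflection. A priori, reflecting across $t=0$ could introduce a Dirac mass in the distributional time derivative at $t=0$ unless the one-sided trace $v(0^+)$ is well defined. It is: since $\cV\hookrightarrow\cV'$ continuously, the hypotheses give $v\in L^2(0,T;\cV')$ with $\frac{dL_v}{dt}\in L^2(0,T;\cV')$, hence $v\in H^1(0,T;\cV')\subset C([0,T];\cV')$, so the even reflection matches continuously in $\cV'$ at the endpoints and its distributional derivative on the enlarged interval is $\operatorname{sign}(t)\,\frac{dL_v}{dt}(|t|)\in L^2(\cV')$ with no singular part. (This uses only the weaker continuity into $\cV'$, so there is no circularity with the continuity into $\cH$ you are trying to prove.) With that in place, the mollification, the energy identity for smooth $\cV$-valued functions, the Cauchy estimate in $C([0,T];\cH)$ obtained by applying that identity to $v_\eta-v_{\eta'}$ from a Lebesgue point $t_0$, and the strong--strong passage to the limit in the duality pairing are all sound.
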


Statement (a) follows from Proposition 2.1 and Theorem 3.1 in chapter 1 of \cite{LionsMage1}, and statement (b) from Theorem II.5.12 of \cite{FabrieBoyer}.

\begin{lem}\lb{L-Lempp}
Let $L\in L^2(0,T;\cV')$ satisfy
$$
\la L(t),w\ra_{\cV',\cV}=0\hbox{ for a.e. }t\in[0,T]
$$
for all $w\in\cV$. Then
$$
L(t)=0\hbox{ for a.e. }t\in[0,T]\,.
$$
\end{lem}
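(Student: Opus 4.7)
The plan is to reduce the uncountable family of a.e.\ vanishing conditions (one for each $w\in\cV$) to a single countable family by exploiting the separability of $\cV$, and then to use the continuity of elements of $\cV'$ to recover vanishing on all of $\cV$.

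First I would fix, once and for all, a strongly Bochner-measurable representative of the class $L\in L^2(0,T;\cV')$, so that $L(t)\in\cV'$ is defined for a.e.\ $t\in[0,T]$ and, for every fixed $w\in\cV$, the scalar function $t\mapsto\la L(t),w\ra_{\cV',\cV}$ is Lebesgue-measurable on $[0,T]$. The hypothesis then reads: for each $w\in\cV$ there is a Lebesgue-null set $N_w\subset[0,T]$ such that $\la L(t),w\ra_{\cV',\cV}=0$ for every $t\in[0,T]\setminus N_w$. A priori $N_w$ depends on $w$, which is exactly what makes the statement nontrivial.

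Next, using that $\cV$ is separable (as assumed at the beginning of the appendix), I would pick a countable dense subset $\{w_n\}_{n\ge 1}\subset\cV$ and set
\[
N:=\bigcup_{n\ge 1}N_{w_n}\subset[0,T],
\]
which is still Lebesgue-null as a countable union of null sets. For every $t\in[0,T]\setminus N$, the functional $L(t)\in\cV'$ vanishes on the dense set $\{w_n\}_{n\ge 1}$, and since $L(t)$ is continuous on $\cV$ by definition of $\cV'$, it must vanish on all of $\cV$. Hence $L(t)=0$ in $\cV'$ for a.e.\ $t\in[0,T]$, which is the conclusion.

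The only delicate point, and the main thing to verify carefully, is the first step: passing from the equivalence class $L\in L^2(0,T;\cV')$ to a well-defined pointwise (a.e.) $\cV'$-valued map for which the scalar pairings $t\mapsto\la L(t),w\ra$ are genuinely measurable. This is automatic from the definition of the Bochner space $L^2(0,T;\cV')$ with $\cV'$ separable (itself a consequence of the separability of $\cV$), so no substantial obstacle arises, and the proof is essentially a one-line separability argument after that.
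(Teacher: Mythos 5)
Your proposal is correct and coincides with the paper's own argument: both take a countable dense subset of the separable space $\cV$, form the union of the corresponding null sets, and conclude for each $t$ outside this union by continuity and density. The extra remarks on choosing a measurable representative are harmless but not needed beyond what the paper already assumes.
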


\begin{proof}
Pick $\cN_w\subset[0,T]$ negligible such that $L$ is defined on $[0,T]\setminus\cN_w$ and
$$
\la L(t),w\ra_{\cV',\cV}=0\hbox{ for all }t\in[0,T]\setminus\cN_w\,.
$$
Let $\cD$ be a dense countable subset of $\cV$ and let
$$
\bar\cN:=\bigcup_{w\in\cD}\cN_w\,.
$$
For all $t\in[0,T]\setminus\bar\cN$, one has
$$
\la L(t),w\ra_{\cV',\cV}=0\hbox{ for all }w\in\cD\quad\hbox{ so that }L(t)=0
$$
because $L(t)$ is a continuous linear functional on $\cV$ and $\cD$ is dense in $\cV$.
\end{proof}

\smallskip
The next lemma recalls the functional background for Green's formula in the context of evolution equations.

\begin{lem}\lb{L-NormTr}
Let $\Om$ be an open subset of $\bR^N$ with smooth boundary, and let $T>0$. Denote by $n$ the unit outward normal field on $\d\Om$. 
Let $\rho\in C([0,T];L^2(\Om))$, and let $m\in L^2((0,T)\times\Om,\bR^N)$. Assume that 
$$
\d_t\rho+\Div_xm=0\quad\hbox{ in the sense of distributions in }(0,T)\times\Om\,.
$$
Then

\noindent
(a) the vector field $m$ has a normal trace\footnote{We recall that $H^{1/2}_{00}((0,T)\times\d\Om)$ is the Lions-Magenes subspace of 
functions in $H^{1/2}((0,T)\times\d\Om)$ whose extension by $0$ to $\bR\times\d\Om$ defines an element 
of $H^{1/2}(\bR\times\d\Om)$; the notation $H^{1/2}_{00}((0,T)\times\d\Om)'$ designates the dual of $H^{1/2}_{00}((0,T)\times\d\Om)$.} 
$m\cdot n\rstr_{(0,T)\times\d\Om}\in H^{1/2}_{00}((0,T)\times\d\Om)'$;

\noindent
(b) for each $\psi\in H^1(\Om)$
$$
\ba
\frac{d}{dt}\int_\Om\rho(\cdot,x)\psi(x)dx&-\int_\Om m(\cdot,x)\cdot\grad_x\psi(x)dx
\\
&=-\la m\cdot n\rstr_{\d\Om},\psi\rstr_{\d\Om}\ra_{H^{-1/2}(\d\Om),H^{1/2}(\d\Om)}
\ea
$$
in $H^{-1}(0,T)$.
\end{lem}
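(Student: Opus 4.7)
\textbf{Proof plan for Lemma \ref{L-NormTr}.}

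The key idea is to combine $\rho$ and $m$ into a single spacetime vector field and apply the classical normal trace theorem for $H(\Div)$ fields. Set
$$
M(t,x) := (\rho(t,x),m(t,x))\in L^2((0,T)\times\Om;\bR^{N+1}).
$$
The assumption $\d_t\rho+\Div_x m=0$ in $\cD'((0,T)\times\Om)$ is exactly $\Div_{t,x}M=0$ in $L^2$, so $M\in H(\Div;(0,T)\times\Om)$. The classical trace theorem (see e.g.\ Temam, or Chap.\ IX.2 of \cite{DL}) then provides a normal trace $M\cdot\nu\in H^{-1/2}(\d((0,T)\times\Om))$ satisfying the Green's identity
$$
\int_0^T\!\!\int_\Om M\cdot\grad_{t,x}\Phi\,dxdt=\la M\cdot\nu,\Phi\rstr_{\d((0,T)\times\Om)}\ra
$$
for every $\Phi\in H^1((0,T)\times\Om)$. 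The boundary of $(0,T)\times\Om$ splits into the two caps $\{0,T\}\times\Om$ and the lateral piece $(0,T)\times\d\Om$, with outward normal $(0,n)$ on the lateral piece; thus the restriction of $M\cdot\nu$ there is formally $m\cdot n$.

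For (a), I would make this restriction precise by testing against functions of the form $\Phi(t,x)=\chi(t)\psi(x)$ with $\chi\in C^\infty_c((0,T))$ and $\psi\in H^1(\Om)$: such $\Phi$ have zero trace on the caps, so $\Phi\rstr_{\d((0,T)\times\Om)}$ is the extension by zero of $\chi\psi\rstr_{(0,T)\times\d\Om}$. The space of such restrictions is dense in $H^{1/2}_{00}((0,T)\times\d\Om)$, and the bound $|\la M\cdot\nu,\Phi\rstr_{\d}\ra|\le\|M\|_{H(\Div)}\|\Phi\|_{H^1}$ lets us identify $m\cdot n\rstr_{(0,T)\times\d\Om}$ with a continuous linear form on $H^{1/2}_{00}((0,T)\times\d\Om)$, i.e.\ an element of its dual.

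For (b), insert the same tensor product $\Phi=\chi\psi$ into Green's identity: since $\Div M=0$, one obtains
$$
\int_0^T\chi'(t)\Big(\int_\Om\rho(t,x)\psi(x)dx\Big)dt+\int_0^T\chi(t)\Big(\int_\Om m(t,x)\cdot\grad_x\psi(x)dx\Big)dt=\la m\cdot n,\chi\psi\rstr_{\d\Om}\ra.
$$
Reading the left-hand side as the pairing of $\chi\in\cD((0,T))$ with $-\frac{d}{dt}\int_\Om\rho\psi dx+\int_\Om m\cdot\grad_x\psi dx$ yields precisely the stated identity in $\cD'((0,T))$. The right-hand side defines an element of $H^{-1}((0,T))$ because, for each fixed $\psi\in H^1(\Om)$, the map $\chi\mapsto\la m\cdot n,\chi\psi\rstr_{\d\Om}\ra$ is continuous on $H^{1/2}_{00}((0,T))\subset H^1_0((0,T))$ by part (a), giving regularity in $H^{-1/2}_{00}(0,T)'\hookrightarrow H^{-1}(0,T)$.

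The main technical obstacle is the density/extension argument tying the abstract normal trace $M\cdot\nu\in H^{-1/2}$ of the full spacetime boundary to the Lions--Magenes dual $H^{1/2}_{00}((0,T)\times\d\Om)'$ supported only on the lateral face. This is a standard, but delicate, localization argument: one must verify that test fields $\chi\psi$ with $\chi\in C^\infty_c((0,T))$ are dense in $H^{1/2}_{00}((0,T)\times\d\Om)$ and that the lateral contribution is independent of the chosen $H^1$-extension into $(0,T)\times\Om$. Once this is in hand, the remaining computations are routine.
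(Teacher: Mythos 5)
Your plan rests on the same underlying mechanism as the paper's proof: view $(\rho,m)$ as a spacetime vector field with $L^2$ divergence, invoke the normal trace theorem for $H(\Div)$, and recover (b) by testing with tensor products $\chi(t)\psi(x)$. The execution differs at exactly the point where the proof has real content. You work on the finite cylinder $(0,T)\times\Om$, so the normal trace lives in $H^{-1/2}$ of the full boundary (two caps plus the lateral face, with corners), and you must then localize it to the lateral face --- the step you yourself flag as ``standard, but delicate.'' The paper sidesteps this entirely: it extends $\rho$ to $\bR\times\Om$ by $\chi(t)\rho(0,x)$ for $t<0$ and $\chi(t)\rho(T,x)$ for $t>T$ (with $\chi$ a compactly supported cutoff equal to $1$ on $[-1,T+1]$) and extends $m$ by zero, checks that the spacetime divergence of the extended field is the explicit $L^2$ function $\chi'(t)(\indc_{t<0}\rho(0,x)+\indc_{t>T}\rho(T,x))$, and then applies the trace theorem on the infinite cylinder, whose boundary is just $\bR\times\d\Om$ --- no caps, no corners. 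The pairing with $\phi\in H^{1/2}_{00}((0,T)\times\d\Om)$ is then defined directly through the zero-extension of $\phi$ to $\bR\times\d\Om$ (which is exactly what the Lions--Magenes space is built for), and independence of the chosen $H^1$ lift follows from Green's formula. Your route can be completed, but note that two of its ingredients still need proof rather than assertion: (i) continuity of the lateral pairing with respect to the $H^{1/2}_{00}$ norm of the boundary datum (not merely the $H^1(Q)$ norm of the lift), which requires a bounded lifting operator and the independence-of-extension argument via $H^1_0(Q)$ test fields; and (ii) the identification of ``extension by zero across the corner edges $\{0,T\}\times\d\Om$ stays in $H^{1/2}$'' with the Lions--Magenes definition used in the statement. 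The paper's explicit extension is precisely the device that makes both issues disappear, which is why it is worth knowing even though your finite-cylinder argument is, in the end, equivalent.
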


\begin{proof}
Let $\chi\in C^\infty_c(\bR)$ be such that
$$
\chi(t)=1\hbox{ for }t\in[-1,T+1]\quad\hbox{ and }\Supp(\chi)\subset[-2,T+2]\,.
$$
Define
$$
\bar\rho(t,x):=\left\{\ba{}&\rho(t,x)\quad&&\hbox{ if }0\le t\le T\\&\chi(t)\rho(0,x)&&\hbox{ if }t<0\\&\chi(t)\rho(T,x)&&\hbox{ if }t>T\ea\right.
$$
and
$$
\bar m(t,x):=\left\{\ba{}&m(t,x)\quad&&\hbox{ if }0\le t\le T\\&0&&\hbox{ if }t\notin[0,T]\ea\right.
$$
so that the vector field $X:=(\bar\rho,\bar m)$ is an extension of $(\rho,m)$ to $\bR\times\Om$ satisfying
$$
X\in L^2(\bR\times\Om;\bR^{N+1})\,.
$$
Besides
$$
(\d_t\bar\rho+\Div_x\bar m)(t,x)=\chi'(t)(\indc_{t<0}\rho(0,x)+\indc_{t>T}\rho(T,x))=:S(t,x)
$$
with $S\in L^2(\bR\times\Om)$ so that
$$
\Div_{t,x}X=S\in L^2(\bR\times\Om)\,.
$$
Therefore $X$ has a normal trace on the boundary $\d(\bR\times\Om)=\bR\times\d\Om$, denoted $X\cdot n\rstr_{\bR\times\d\Om}\in H^{-1/2}(\bR\times\d\Om)$.

Let $\phi\in H^{1/2}_{00}((0,T)\times\d\Om)$; denote by $\bar\phi$ its extension by $0$ to $\bR\times\d\Om$. Thus $\bar\phi\in H^{1/2}(\bR\times\d\Om)$ and there exists 
$\bar\Phi\in H^1(\bR\times\Om)$ such that $\bar\phi=\bar\Phi\rstr_{\bR\times\d\Om}$. The normal trace of $m$ is then defined as follows: by Green's formula
$$
\ba
\la m\cdot n\rstr_{\bR\times\d\Om},\phi\ra_{H^{1/2}_{00}((0,T)\times\d\Om)'H^{1/2}_{00}((0,T)\times\d\Om)}
\\
:=
\la X\cdot n\rstr_{\bR\times\d\Om},\bar\phi\ra_{H^{1/2}((0,T)\times\d\Om)'H^{1/2}((0,T)\times\d\Om)}
\\
=
\iint_{\bR\times\Om}(\bar\rho\d_t\bar\Phi+\bar m\cdot\grad_x\bar\Phi+S\bar\Phi)(t,x)dxdt\,.
\ea
$$
Applying Green's formula on $(0,T)\times\Om$ shows that two different extensions  of the vector field $(\rho,m)$ define the same distribution $m\cdot n\rstr_{(0,T)}\times\d\Om)$ on 
$(0,T)\times\d\Om$. This completes the proof of statement a). 

As for statement b), let $\ka\in H^1_0(0,T)$ and $\psi\in H^1(\Om)$, define $\Phi(t,x):=\ka(t)\psi(x)$ and let $\bar\Phi$ be the extension of $\Phi$ by $0$ to $\bR\times\Om$, so that 
$\bar\Phi\in H^1(\bR\times\Om)$. Thus $\phi=\Phi\rstr_{(0,T)\times\d\Om}\in H^{1/2}_{00}((0,T)\times\d\Om)$ and
$$
\ba
{}&\la\la m\cdot n\rstr_{\d\Om},\psi\rstr_{\d\Om}\ra_{H^{-1/2}(\d\Om),H^{1/2}(\d\Om)},\ka\ra_{H^{-1}(0,T),H^1_0(0,T)}
\\
&\qquad\qquad:=
\la m\cdot n\rstr_{\bR\times\d\Om},\phi\ra_{H^{1/2}_{00}((0,T)\times\d\Om)'H^{1/2}_{00}((0,T)\times\d\Om)}
\\
&\qquad\qquad=
\iint_{\bR\times\Om}(\bar\rho\d_t\bar\Phi+\bar m\cdot\grad_x\bar\Phi+S\bar\Phi)(t,x)dxdt
\\
&\qquad\qquad=
\int_0^T\int_{\Om}(\rho(t,x)\ka'(t)\psi(x)+m(t,x)\cdot\grad\psi(x)\ka(t))dxdt
\\
&\qquad\qquad=
-\La\frac{d}{dt}\int_\Om\rho(t,x)\psi(x)dx,\ka\Ra_{H^{-1}(0,T),H^1_0(0,T)}
\\
&\qquad\qquad\quad\,\,+\int_0^T\int_{\Om}m(t,x)\cdot\grad\psi(x)\ka(t))dxdt
\ea
$$
which is precisely the identity in statement (b).
\end{proof}

\end{appendix}

%{\bf Acknowledgement.}
%\medskip

%%%%%%%%%%%%%%%%%%%%%%%%%%%%%%%%%%%%%%%%%%%%%%%%%%%%%%%%%%%%%%%%%%%%%%%%%%%%%%%%%%%%%%%%%%%%%%%%

%%%%%%%%%%%%%%%%%%%%%%%%%%%%%%%%%%%%%%%%%%%%%%%%%%%%%%%%%%%%%%%%%%%%%%%%%%%%%%%%%%%%%%%%%%%%%%%%

\end{document}